\newtheorem{thm}{Theorem}[section]
\newtheorem{prop}[thm]{Proposition}
\newtheorem{cor}[thm]{Corollary}
\theoremstyle{remark}
\newtheorem{remark}{Remark}[section]
\theoremstyle{definition}
\newtheorem{definition}[thm]{Definition}
\def\qq{\mathbb{Q}}
\def\rr{\mathbb{R}}
\def\zz{\mathbb{Z}}
\def\CC{\mathbb{C}}
\def\SS{\mathbb{S}}
\def\mm{\mathcal{M}}
\def\aa{\mathcal{A}}
\def\QQ{\mathcal{Q}}
\def\cc{\mathcal{C}}
\def\jj{\mathcal{J}}
\def\tt{\mathcal{T}}
\def\vv{\mathcal{V}}
\def\hh{\mathcal{H}}
\def\HH{\mathbb{H}}
\def\deldelbar{\partial \overline{\partial}}
\def\gm{\mathbb{G}_\mathrm{m}}
\numberwithin{equation}{section}
\begin{document}

\title[Second variation of Zhang's $\lambda$-invariant]{Second variation of Zhang's $\lambda$-invariant on the moduli space of curves}

\author{Robin de Jong}

\subjclass[2010]{Primary 14G40, secondary 14D07, 11G50.}

\keywords{Biextension line bundle, Ceresa cycle, Faltings delta-invariant, Gross-Schoen cycle, harmonic volume, moduli space of curves.}

\begin{abstract} We compute the second variation of the $\lambda$-invariant, recently introduced by S. Zhang, on the complex moduli space $\mathcal{M}_g$  of curves of genus $g \geq 2$, using work of N. Kawazumi. As a result we prove that $(8g+4)\lambda$  is equal, up to a constant, to the $\beta$-invariant introduced some time ago by R. Hain and D. Reed. We deduce some consequences; for example we calculate the $\lambda$-invariant for each hyperelliptic curve, expressing it in terms of the Petersson norm of the discriminant modular form.
\end{abstract}

\maketitle

\thispagestyle{empty}

\section{Introduction}

Recently, independently S. Zhang \cite{zh} and N. Kawazumi \cite{kaw} introduced a new interesting real-valued function $\varphi$ on the moduli space $\mm_g$ of complex curves of genus $g \geq 2$. Its value at a curve $[X] \in \mm_g$ is
given as follows. Let $\mathrm{H}^0(X,\omega_X)$ be the space of holomorphic differentials on $X$, equipped with the hermitian inner product
\begin{equation} \label{defhodgemetric} (\alpha,\beta) \mapsto \frac{i}{2} \int_X \alpha \wedge \overline{\beta} \, . 
\end{equation}
Choose an orthonormal basis $(\eta_1,\ldots,\eta_g)$ of $\mathrm{H}^0(X,\omega_X)$, and put:
\[ \mu_X = \frac{i}{2g} \sum_{k=1}^g \eta_k \wedge \overline{\eta}_k \, . \]
Note that $\mu_X$ is a volume form on $X$, independent of the chosen basis; in fact,
$\mu_X$ can be identified with the pullback, along any Abel-Jacobi map, of a translation invariant $(1,1)$-form on the jacobian of $X$. 
Let $\Delta_\mathrm{Ar}$ be the Laplacian on $L^2(X,\mu_X)$ determined by setting
\[ \frac{ \partial \overline{\partial}}{\pi i} f = \Delta_\mathrm{Ar}(f)
\cdot \mu_X \, , \]
and let $(\phi_\ell)_{\ell=0}^\infty$ be an orthonormal basis of real eigenfunctions of $\Delta_\mathrm{Ar}$, with eigenvalues
$0=\lambda_0<\lambda_1 \leq \lambda_2 \leq \ldots$. 
\begin{definition}   \emph{(S. Zhang, N. Kawazumi)} We define the $\varphi$-invariant $\varphi(X)$ of $X$ to be the real number
\[ \varphi(X) = \sum_{\ell>0} \frac{2}{\lambda_\ell} \sum_{m,n=1}^g \left|
\int_X \phi_\ell \cdot \eta_m \wedge \overline{\eta}_n \right|^2 \, . \]
\end{definition}
It is not difficult to check that $\varphi(X)$ is indeed an invariant of $X$. 

One important reason for studying $\varphi$ is its significance in number theory, discovered by Zhang. Briefly, with it it can be shown that the Bogomolov conjecture (for curves over number fields) follows naturally from a standard conjecture of Hodge index type of Gillet-Soul\'e (we briefly recall this relationship in Section \ref{context} below). In order to see this implication, one uses that the $\varphi$-invariant is strictly \emph{positive}. Indeed, the $\varphi$-invariant can only vanish if each $\eta_m \wedge \overline{\eta}_n$ is perpendicular to all $\phi_\ell$. This would imply that each $\eta_m \wedge \overline{\eta}_n$ is proportional to $\mu_X$, but that is not the case under our assumption that $g \geq 2$ (cf. \cite{zh}, Remark after Proposition 2.5.3).

In view of its ramifications in number theory, it is of interest to try to study further properties of $\varphi$ in detail. A first important step is in the work \cite{kaw} of Kawazumi. The main theorem in \cite{kaw} furnishes an expression for the second variation of $\varphi$ on $\mm_g$, connecting $\varphi$ with certain canonical
$2$-forms over the universal curve $\mathcal{C}_g$ over $\mathcal{M}_g$ associated (following work of S. Morita) to the standard representation $H$ of $\mathrm{Sp}_{2g}(\zz)$, its third exterior power $\wedge^3 H$, and the `primitive part' $\wedge^3H/H$ of the latter. Here $H$ is seen as a subrepresentation of $\wedge^3H$ by wedging with the standard polarization form in $\wedge^2 H$.

In this paper we will use Kawazumi's result to establish some new properties of $\varphi$. More precisely: we determine its behavior in a neighbourhood of the boundary of $\mm_g$ in the Deligne-Mumford compactification $\overline{\mm}_g$, and we calculate $\varphi$ for hyperelliptic curves. 

In order to establish these results, it turns out to be convenient to consider the following variant of $\varphi$, also introduced in \cite{zh}.
\begin{definition} \emph{(Zhang)} Let $\delta_F$ on $\mm_g$ be Faltings's delta-invariant from \cite{fa}, and put $\delta=\delta_F - 4g \log(2\pi)$. We define the $\lambda$-invariant to be the real-valued function
\begin{equation} \label{lambdaphidelta} \lambda = \frac{g-1}{6(2g+1)}\varphi + \frac{1}{12}\delta  
\end{equation}
on $\mm_g$.
\end{definition}
Our main result in this paper is that $(8g+4)\lambda$ can be directly related to the function $\beta$ on $\mm_g$, introduced by R. Hain and D. Reed around ten years ago \cite{hrar}. This $\beta$-invariant is defined as follows.
Let $\mathcal{J}(\wedge^3H/H)$ be the Griffiths intermediate jacobian
fibration over $\mathcal{M}_g$ associated to $\wedge^3H/H$, and
let $\hat{\mathcal{B}}$ be the pullback, along the graph of the canonical polarization, of the standard $\gm$-biextension line bundle on
$\mathcal{J}(\wedge^3H/H) \times \check{\mathcal{J}(\wedge^3H/H)}$. 
The holomorphic line bundle $\hat{\mathcal{B}}$ comes with a natural hermitian metric
$\|\cdot\|_{\hat{\mathcal{B}}}$.
Let $\nu \colon \mathcal{M}_g \to \mathcal{J}(\wedge^3H/H)$ be the normal
function that maps each curve $X$ to the point in the intermediate
jacobian of $\wedge^3 H_1(X)/H_1(X)$ associated, by the Griffiths
Abel-Jacobi map, to the Ceresa cycle $X-X^-$ in
the jacobian of~$X$.

By a result of Morita one has $\nu^*\hat{\mathcal{B}} \cong
\mathcal{L}^{\otimes 8g+4}$,
where $\mathcal{L}=\det \mathrm{R}\pi_* \omega$ is the determinant of the Hodge
bundle on $\mathcal{M}_g$. The isomorphism is unique up to a
non-zero scalar, as the only invertible holomorphic functions on $\mm_g$ are scalars.
Denote by $\|\cdot \|_\mathrm{biext}$ a metric 
on $\mathcal{L}^{\otimes 8g+4}$ that one obtains by pulling back
$\|\cdot \|_{\hat{\mathcal{B}}}$ along $\nu$,
and transporting it to $\mathcal{L}^{\otimes 8g+4}$ using a Morita isomorphism.
Denote by $\| \cdot \|_{\mathrm{Hdg}}$ the metric on $\mathcal{L}^{\otimes 8g+4}$ induced by the Hodge metric (\ref{defhodgemetric}) on $\mathcal{L}$.

The Hain-Reed $\beta$-invariant \cite{hrar} is given by the ratio of these two metrics.
\begin{definition} \emph{(R. Hain, D. Reed)} We define the $\beta$-invariant on $\mm_g$ to be the real-valued function
\[ \beta = \log \left( \frac{ \| \cdot \|_\mathrm{biext}}{\| \cdot
\|_{\mathrm{Hdg}}} \right) \, . \]
\end{definition}

Note that the $\beta$-invariant is only defined up to an additive constant on
$\mathcal{M}_g$. 

Our main result is
\begin{thm} \label{realmain}
The equality $(8g+4)\lambda = \beta$ holds, up to a constant depending only on~$g$.
\end{thm}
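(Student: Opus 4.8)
The plan is to compare the two functions through their second variations. Since $\mm_g$ is connected and both $(8g+4)\lambda$ and $\beta$ are defined only up to additive constants, it suffices to prove that the $(1,1)$-forms $(8g+4)\,\deldelbar\lambda$ and $\deldelbar\beta$ coincide on $\mm_g$, and then to argue that a function with vanishing $\deldelbar$ and the correct growth along the boundary of $\overline{\mm}_g$ is constant.

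First I would compute $\deldelbar\lambda$. By the defining relation (\ref{lambdaphidelta}) this splits into a $\varphi$-contribution and a $\delta$-contribution. For the former I would feed in Kawazumi's main theorem, which expresses $\deldelbar\varphi$ as a universal combination of the canonical $2$-forms on $\mm_g$ obtained by integrating over the fibres of $\cc_g$ the tautological forms attached to $H$, $\wedge^3 H$ and $\wedge^3 H/H$; I expect this to produce $\deldelbar\varphi$ in terms of the Weil--Petersson form and the first Chern form $c_1(\mathcal{L})$ of the Hodge bundle carrying the $L^2$-metric (\ref{defhodgemetric}). For the latter I would invoke the known formula for the second variation of the Faltings delta-invariant, which expresses $\deldelbar\delta_F$ (equivalently $\deldelbar\delta$, since the two differ by a constant) again through $c_1(\mathcal{L})$ and the Weil--Petersson class, coming from the curvatures entering the metrized Mumford isomorphism. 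Combining these, $\deldelbar\lambda$ becomes an explicit linear combination of these two canonical forms.

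Next I would compute $\deldelbar\beta$. Here $\beta$ is the logarithm of the ratio of the pulled-back biextension metric to the Hodge metric on $\mathcal{L}^{\otimes 8g+4}$, so the Hodge factor contributes exactly $(8g+4)\,c_1(\mathcal{L})$ and the real content is the curvature of $\nu^*\hat{\bb}$. For this I would use the Hain--Reed analysis of the biextension line bundle: the curvature of $\nu^*\hat{\bb}$ is a canonical $2$-form which, through the Griffiths infinitesimal invariant of the normal function $\nu$ of the Ceresa cycle, is once more expressible by the tautological forms attached to $\wedge^3 H/H$, yielding a second explicit combination of the Weil--Petersson form and $c_1(\mathcal{L})$.

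It then remains to match the two outputs. The rational coefficients $\tfrac{g-1}{6(2g+1)}$ and $\tfrac{1}{12}$ in the definition of $\lambda$, together with the twist $8g+4$ from Morita's isomorphism $\nu^*\hat{\bb}\cong\mathcal{L}^{\otimes 8g+4}$, are precisely what is needed to force $(8g+4)\,\deldelbar\lambda=\deldelbar\beta$. I expect the main obstacle to lie in the normalisations: Kawazumi's forms, the second variation of $\delta$, and the Hain--Reed biextension curvature must all be written against a single common basis of canonical $2$-forms on $\mm_g$, and every constant tracked, which is the delicate point. Once the curvatures agree, $u:=(8g+4)\lambda-\beta$ is pluriharmonic on $\mm_g$; since $H^1(\mm_g;\rr)=0$ for $g\ge 2$ we may write $u=\mathrm{Re}\,f$ for a global holomorphic function $f$, and the logarithmic growth of $\lambda$ and $\beta$ along the boundary lets $f$ extend to the compact space $\overline{\mm}_g$, forcing $f$ and hence $u$ to be constant. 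This yields the claimed equality up to a constant depending only on $g$.
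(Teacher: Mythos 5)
Your overall strategy coincides with the paper's: reduce Theorem \ref{realmain} to an identity of $(1,1)$-forms and then dispose of the pluriharmonic difference. But there are two genuine gaps. The first is that the ``matching of the two outputs'', which you defer as the delicate point, is not a bookkeeping of normalisations --- it \emph{is} the theorem --- and the framework you propose for it would not work. Kawazumi's result (Theorem \ref{kawazumi}) is an identity of $2$-forms on the universal curve $\cc_g$, not on $\mm_g$, involving the Arakelov form $e^A$ and the harmonic-volume form $e^J$; and none of $\deldelbar\varphi$, $\deldelbar\delta_F$, $\omega_{\mathrm{HR}}$ is, \emph{as a differential form}, a linear combination of the Weil--Petersson form and $c_1(\mathcal{L},\|\cdot\|_{\mathrm{Hdg}})$. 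For $\delta_F$ the relevant curvature identity involves the fibre integral $\pi_*(e^A\wedge e^A)$, which is not the Weil--Petersson form; and for $\omega_{\mathrm{HR}}$ no expression in your proposed two-dimensional span of forms exists --- the very content of $\beta$ is that $\omega_{\mathrm{HR}}$ and $\omega_{\mathrm{Hdg}}$ represent the same class yet differ by a non-trivial $\deldelbar$-exact form. The paper's actual mechanism is two evaluations of one and the same form $j_\alpha^* w_H$ on the level cover $\cc_g[2]$: Proposition \ref{firstequality}, $j_\alpha^*w_H = -\frac{g(g-1)}{2}e^J - \frac{3}{2}\omega_{\mathrm{HR}}$ (Morita's class plus the biextension curvature), and Proposition \ref{secondequality}, $j_\alpha^*w_H = -\frac{g(g-1)}{2}e^A - \frac{3}{2}\omega_{\mathrm{Hdg}} - \frac{1}{8}\frac{\deldelbar}{\pi i}\delta_F$ (the metrized theta-divisor isomorphism and the norm computation from \cite{dj}); subtracting gives (\ref{eAeJ}), which with Theorem \ref{kawazumi} and (\ref{lambdaphidelta}) yields Theorem \ref{main}. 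Without identities of this kind, the coefficients cannot simply be ``forced'' to agree.

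The second gap is that your concluding step is circular. You invoke ``the logarithmic growth of $\lambda$ and $\beta$ along the boundary'' to extend a holomorphic function over $\overline{\mm}_g$. The asymptotics of $\beta$ (Hain--Reed) and of $\delta_F$ (Jorgenson, Wentworth) are indeed known, but the boundary behaviour of $\lambda$ requires that of $\varphi$, which is \emph{not} known in advance: it is exactly Corollary \ref{cor}, which the paper deduces \emph{from} Theorem \ref{realmain}. A priori one only has $\varphi>0$, a one-sided bound that does not control $u=(8g+4)\lambda-\beta$ from above near the boundary. The repair is to drop the boundary argument entirely, as the paper does: every pluriharmonic function on $\mm_g$ is constant. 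Concretely, your global $f$ with $u=\mathrm{Re}\,f$ (obtained from $H^1(\mm_g;\rr)=0$, or equivalently by pulling back to contractible Teichm\"uller space and descending, which works since the mapping class group has no non-trivial real characters) gives an invertible holomorphic function $e^f$ on $\mm_g$, and such functions are constant by Lemma 2.1 of \cite{hrar}; hence $u$ is constant, with no appeal to $\overline{\mm}_g$.
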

The proof essentially boils down to a comparison of the second variations on $\mm_g$ of left and right hand side. Let
$\omega_{\mathrm{HR}}$ be the first Chern form of
$(\mathcal{L},\|\cdot\|_\mathrm{biext}^{1/(8g+4)})$ and let
$\omega_{\mathrm{Hdg}}$ be that of
$(\mathcal{L},\|\cdot\|_\mathrm{Hdg}^{1/(8g+4)})$. Then the differential equation
\[ \frac{\partial \overline{\partial} }{\pi i} \beta = (8g+4)
(\omega_\mathrm{HR} - \omega_\mathrm{Hdg})  \]
holds on $\mm_g$. With this, Theorem \ref{realmain} will follow from
\begin{thm} \label{main} The second variation of Zhang's $\lambda$-invariant over $\mathcal{M}_g$ satisfies
\[ \frac{\partial \overline{\partial} }{\pi i} \lambda =
\omega_\mathrm{HR} - \omega_\mathrm{Hdg} \, . \]
\end{thm}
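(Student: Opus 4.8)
\emph{Strategy.} Both sides of the asserted equality are real $(1,1)$-forms on $\mm_g$, and both are $\frac{\partial \overline{\partial}}{\pi i}$-exact: the left side manifestly, and the right side because it equals $\frac{1}{8g+4}\,\frac{\partial \overline{\partial}}{\pi i}\beta$ by the differential equation for $\beta$ recorded above. Thus there is no cohomological content (indeed $\omega_{\mathrm{HR}}$ and $\omega_{\mathrm{Hdg}}$ are Chern forms of two metrics on the one line bundle $\mathcal{L}$, so they are cohomologous), and the comparison must be carried out at the level of honest differential forms rather than classes. The plan is to express each side through a single fixed collection of canonical $(1,1)$-forms on $\mm_g$ built from the variation of Hodge structure and the Hodge/Arakelov metrics --- the Hodge form $\omega_{\mathrm{Hdg}} = c_1(\mathcal{L},\|\cdot\|_{\mathrm{Hdg}})$, the ``primitive'' form $\omega_{\mathrm{prim}}$ attached to $\wedge^3H/H$, and an auxiliary form $\omega_e = \pi_*(\hat{\omega}^2)$ obtained by fibre integration of the square of the Arakelov first Chern form $\hat{\omega}$ of the relative dualizing sheaf --- and then to match coefficients. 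A preliminary point is to record the linear relations that genuinely hold among these forms (not merely in cohomology, where the smallness of $H^2(\mm_g)$ forces proportionality), so as to work in an honest basis.

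\emph{The $\lambda$-side.} By linearity $\frac{\partial \overline{\partial}}{\pi i}\lambda = \frac{g-1}{6(2g+1)}\,\frac{\partial \overline{\partial}}{\pi i}\varphi + \frac{1}{12}\,\frac{\partial \overline{\partial}}{\pi i}\delta$. For the first summand I would feed in Kawazumi's second-variation formula \cite{kaw}, which presents $\frac{\partial \overline{\partial}}{\pi i}\varphi$ as a combination, with coefficients rational in $g$, of the push-forwards to $\mm_g$ of the canonical $2$-forms attached to $H$, to $\wedge^3H$ and to $\wedge^3H/H$; using the defining exact sequence $0 \to H \to \wedge^3H \to \wedge^3H/H \to 0$ to express the form attached to $\wedge^3H$ through those attached to $H$ and to $\wedge^3H/H$, I would rewrite it through $\omega_{\mathrm{Hdg}}$, $\omega_{\mathrm{prim}}$ and $\omega_e$ alone, the form attached to $H$ being a universal multiple of $\omega_{\mathrm{Hdg}}$. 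For the second summand I would use the curvature formula for Faltings's $\delta$ on $\mm_g$, expressing $\frac{\partial \overline{\partial}}{\pi i}\delta$ in terms of the same canonical forms; this is available from the anomaly formula for the Quillen metric on $\det \mathrm{R}\pi_*\omega$ together with the Bismut--Gillet--Soul\'e expression for its curvature, $\delta$ being, up to the Hodge metric and explicit constants, the analytic torsion of the $\overline{\partial}$-Laplacian. I expect the particular weights $\frac{g-1}{6(2g+1)}$ and $\frac{1}{12}$ to be exactly those that cancel the $\omega_e$-contributions of $\varphi$ and $\delta$, leaving $\frac{\partial \overline{\partial}}{\pi i}\lambda = P(g)\,\omega_{\mathrm{Hdg}} + Q(g)\,\omega_{\mathrm{prim}}$ with $P, Q$ explicit in $g$.

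\emph{The biextension side.} Since $\omega_{\mathrm{Hdg}}$ is already in the basis, it remains to compute $\omega_{\mathrm{HR}}$, the first Chern form of the metric obtained by pulling $\|\cdot\|_{\hat{\mathcal{B}}}$ back along the Ceresa normal function $\nu$ and transporting it to $\mathcal{L}$ by a Morita isomorphism. Here I would invoke the Hain--Reed analysis \cite{hrar} of the canonical metric on the $\gm$-biextension line bundle: its curvature along a normal function is governed by the polarized variation of Hodge structure $\wedge^3H/H$ underlying the intermediate jacobian fibration, so that $\omega_{\mathrm{HR}}$ is again a combination of $\omega_{\mathrm{Hdg}}$ and $\omega_{\mathrm{prim}}$, the overall scale being pinned down by Morita's isomorphism $\nu^*\hat{\mathcal{B}} \cong \mathcal{L}^{\otimes 8g+4}$ --- the source of the factor $8g+4$. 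Writing $\omega_{\mathrm{HR}} = R_1(g)\,\omega_{\mathrm{Hdg}} + R_2(g)\,\omega_{\mathrm{prim}}$, the theorem reduces to the numerical identities $P(g) = R_1(g) - 1$ and $Q(g) = R_2(g)$, which I would check by assembling the constants gathered above.

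\emph{Main obstacle.} The crux is the curvature computation for the biextension metric together with its exact normalization --- producing $\omega_{\mathrm{HR}}$ in the canonical basis with the correct constants. This means making $\|\cdot\|_{\hat{\mathcal{B}}}$ explicit through the archimedean height pairing attached to the Griffiths Abel--Jacobi image of the Ceresa cycle, differentiating it twice along $\mm_g$, and reconciling the result with the Hodge-theoretic form $\omega_{\mathrm{prim}}$, all while carrying the factor $8g+4$ from Morita's isomorphism throughout. A second, more bookkeeping, hurdle is that the statement is an equality of forms on the nose, so the normalizations implicit in Kawazumi's formula, in the $\delta$-curvature formula, and in the definition of $\omega_{\mathrm{prim}}$ must first be made mutually consistent; the additive indeterminacy of $\beta$ causes no trouble, as it is annihilated by $\frac{\partial \overline{\partial}}{\pi i}$, but it is precisely why the companion Theorem \ref{realmain} can pin down $(8g+4)\lambda - \beta$ only up to a constant.
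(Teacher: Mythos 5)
Your overall architecture --- write both sides as combinations of a fixed collection of canonical $(1,1)$-forms on $\mm_g$ and compare coefficients --- is reasonable, and two of your three ingredients are sound in principle: $\omega_{\mathrm{HR}}$ is indeed pinned down by the Hain--Reed curvature statement (the first Chern form of the metrized biextension bundle is $2\,w_{\wedge^3H/H}$, so $\omega_{\mathrm{HR}}$ is a universal multiple of $\nu^*w_{\wedge^3H/H}$, your $\omega_{\mathrm{prim}}$, with no $\omega_{\mathrm{Hdg}}$-component at all), and a curvature formula for $\delta_F$ on $\mm_g$ can be extracted from Quillen-metric/anomaly technology. Note that the paper takes a different route for $\delta_F$: it uses the theta-function description, namely the metrized isomorphism $j_\alpha^*\mathcal{O}(\Theta_\alpha)\cong\omega^{\otimes g(g-1)/2}\otimes\mathcal{L}^{-1}$ of norm $\exp(\delta_F/8)$ on the level-$2$ cover $\cc_g[2]$, which is why the theta characteristic and the level structure appear there.

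The genuine gap is in your treatment of the $\varphi$-term, and it is the crux. You describe Kawazumi's result as presenting $\frac{\deldelbar}{\pi i}\varphi$ as a combination of \emph{push-forwards to $\mm_g$} of the canonical $2$-forms attached to $H$, $\wedge^3H$ and $\wedge^3H/H$. It is not: Theorem \ref{kawazumi} is an identity of $(1,1)$-forms on the universal curve $\cc_g$, namely $e^A - e^J = \frac{1}{2g(2g+1)}\frac{\deldelbar}{\pi i}\varphi$, where $e^J$ is built from \emph{pullbacks} of $w_H$ and $w_{\wedge^3H}$ along the pointed maps $\kappa$ and $\mu$ defined on $\cc_g$; these forms do not live on $\mm_g$ (only $\nu=p\circ\mu$ factors through $\mm_g$), and no fibre integration occurs in the statement. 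Consequently your plan has no mechanism for producing the needed expression of $\frac{\deldelbar}{\pi i}\varphi$ on $\mm_g$ in your basis, and this missing step is precisely where the paper's work lies: one computes $j_\alpha^*w_H$ on $\cc_g[2]$ in two ways --- via Morita/Hain--Reed, $j_\alpha^*w_H = -\frac{g(g-1)}{2}e^J - \frac{3}{2}\omega_{\mathrm{HR}}$ (Proposition \ref{firstequality}), and via the theta divisor and Faltings' invariant, $j_\alpha^*w_H = -\frac{g(g-1)}{2}e^A - \frac{3}{2}\omega_{\mathrm{Hdg}} - \frac{1}{8}\frac{\deldelbar}{\pi i}\delta_F$ (Proposition \ref{secondequality}) --- and subtracts, exhibiting $\frac{g(g-1)}{2}(e^A-e^J)$ as the pullback of an explicit form from $\mm_g$; only then does Kawazumi's identity convert this difference into the second variation of $\varphi$, after which the identity descends to $\mm_g$ and combines with (\ref{lambdaphidelta}). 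If you insist on working with push-forwards on $\mm_g$, you would have to wedge Kawazumi's identity with a closed form of nonzero fibre degree and compute push-forwards such as $\pi_*\bigl((e^J)^2\bigr)$ and $\pi_*\bigl(e^A\wedge e^J\bigr)$ at the level of forms, which is work of exactly the same nature as the two propositions above and which your proposal does not identify. A secondary weakness: the constants $P(g)$, $Q(g)$, $R_1(g)$, $R_2(g)$, whose matching is the entire quantitative content of the theorem, are nowhere computed; the ``expected'' cancellation of the $\omega_e$-contributions is asserted, not verified, so even granting the structural outline the proof is not complete.
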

Our proof of Theorem \ref{main} will be based on Kawazumi's calculation of the second variation of $\varphi$ referred to above. Thus, by equation (\ref{lambdaphidelta}), our contribution is essentially to find a convenient expression for the second variation of the Faltings delta-invariant over $\mm_g$.

Theorem \ref{realmain} allows us to determine the asymptotic behavior of $\varphi$ along the boundary components of the Deligne-Mumford compactification $\overline{\mathcal{M}}_g$. This now follows immediately from earlier results: in \cite{hrar} the asymptotic behavior of $\beta$ is computed, and for the Faltings delta-invariant $\delta_F$, this was done by J. Jorgenson \cite{jo} and R. Wentworth \cite{we}, independently. Combining these results using equation (\ref{lambdaphidelta}) we obtain
\begin{cor} \label{cor} Let $X \to D$ be a proper family of stable curves of genus $g \geq 2$
over the unit disk. Assume that $X$ is smooth and that $X_t$ is smooth for $t \neq 0$.
\begin{itemize}
\item If $X_0$ is irreducible with only one node, then
\[ \varphi(X_t) \sim -\frac{g-1}{6g} \log |t|  \]
as $t \to 0$.
\item If $X_0$ is reducible with one node and its components have genera $i$ and $g-i$ then
\[ \varphi(X_t) \sim -\frac{2i(g-i)}{g} \log |t| \]
as $t \to 0$.
\end{itemize}
\end{cor}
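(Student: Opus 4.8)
The plan is to reduce the corollary to the already-known boundary asymptotics of the Faltings and Hain--Reed invariants, using Theorem \ref{realmain} to express $\varphi$ through them. First I would combine Theorem \ref{realmain}, which gives $(8g+4)\lambda = \beta$ up to an additive constant, with the defining relation (\ref{lambdaphidelta}). Writing $8g+4 = 4(2g+1)$ and clearing denominators yields $3\beta = 2(g-1)\varphi + (2g+1)\delta$, hence
\[ \varphi = \frac{3\beta - (2g+1)\delta}{2(g-1)} \, . \]
Since $\delta = \delta_F - 4g\log(2\pi)$ differs from $\delta_F$ by a constant, and since $\beta$ is in any case only defined up to an additive constant, this realizes $\varphi$ as an explicit linear combination of $\beta$ and $\delta_F$, modulo a bounded term that is irrelevant for the leading asymptotics as $t \to 0$.

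Next I would invoke the two external inputs. The boundary behavior of $\delta_F$ was determined by Jorgenson \cite{jo} and Wentworth \cite{we}: as $t \to 0$ one has $\delta_F(X_t) \sim -\frac{4g-1}{3g}\log|t|$ when $X_0$ is irreducible with a single non-separating node, and $\delta_F(X_t) \sim -\frac{4i(g-i)}{g}\log|t|$ when $X_0$ is reducible with one node separating it into components of genera $i$ and $g-i$. The corresponding asymptotics of $\beta$ were computed by Hain and Reed in \cite{hrar}; in the normalization compatible with Theorem \ref{realmain} the leading terms are $\beta(X_t) \sim -g\log|t|$ in the non-separating case and $\beta(X_t) \sim -4i(g-i)\log|t|$ in the separating case.

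The corollary then follows by direct substitution into the displayed formula. In the non-separating case the coefficient of $\log|t|$ in $\varphi$ becomes
\[ \frac{3(-g) - (2g+1)\left(-\frac{4g-1}{3g}\right)}{2(g-1)} = \frac{-9g^2 + (2g+1)(4g-1)}{6g(g-1)} = \frac{-(g-1)^2}{6g(g-1)} = -\frac{g-1}{6g} \, , \]
while in the separating case it becomes
\[ \frac{3\left(-4i(g-i)\right) - (2g+1)\left(-\frac{4i(g-i)}{g}\right)}{2(g-1)} = \frac{4i(g-i)(-g+1)}{2g(g-1)} = -\frac{2i(g-i)}{g} \, , \]
which are precisely the claimed leading coefficients.

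I expect the only genuine subtlety to lie in reconciling the normalizations of the three invariants, which are drawn from different sources: the exact constants in Zhang's $\lambda$, in $\delta_F$ versus $\delta$, and in Hain and Reed's biextension metric (recall that $\beta$ is defined only up to a constant, so its additive normalization must be tracked). One must also verify that the subleading $\log(-\log|t|)$ correction terms, which are present for $\delta_F$ in the non-separating case, either cancel against the analogous terms in $\beta$ or are in any case dominated by the $\log|t|$ term, so that they do not disturb the stated $\sim$ asymptotics. Once the conventions are matched, the deduction is purely formal.
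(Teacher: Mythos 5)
Your proposal is correct and follows essentially the same route as the paper, whose entire proof is precisely this: combine Theorem \ref{realmain} and equation (\ref{lambdaphidelta}) with the asymptotics of $\beta$ from \cite{hrar} and of $\delta_F$ from \cite{jo}, \cite{we}, and your quoted leading coefficients and arithmetic check out. One sharpening of your final caveat: since the paper defines $f \sim g$ to mean $f-g$ is \emph{bounded}, it is not enough for the $\log(-\log|t|)$ terms in the non-separating case to be dominated by $\log|t|$; they must genuinely cancel between $3\beta$ and $(2g+1)\delta_F$ in your formula for $2(g-1)\varphi$, which they in fact do (the $\log\log$ coefficient $-6$ for $\delta_F$ corresponds exactly to the $\log\log$ coefficient $-(4g+2)$ for $\beta$ coming from the degeneration of the Hodge metric).
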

Here, if $f,g$ are two functions on the punctured unit disk, the notation $f \sim g$ denotes that $f-g$ is bounded as $t\to 0$.  The corollary implies that $\varphi$ is a `Weil function' on $\overline{\mm}_g$ (see \cite{jo}, Section 6 for a discussion). It would be very interesting to know whether $\varphi$ is also a Morse function on $\mm_g$, and if so, whether its behavior at its critical points can be effectively analyzed.

Our next result concerns the calculation of the $\lambda$- and $\varphi$-invariant of a hyperelliptic curve. Over the hyperelliptic locus in genus $g \geq 2$ one can make the metric $\|\cdot\|_{\mathrm{Hdg}}$ fairly explicit, using the discriminant modular form $\Delta_g$ (see Section \ref{hyperelliptic} for details). The metric $\|\cdot \|_{\mathrm{HR}}$ turns out to be constant over the hyperelliptic locus. Putting these facts together one is led to the following theorem.
\begin{thm} \label{hyp} Let $\|\Delta_g\|$ be the Petersson norm of $\Delta_g$ and let $n={2g \choose g+1}$. 
Then on the hyperelliptic locus in genus $g$, the $\lambda$-invariant is
given, up to a constant depending only on $g$, by
\[ (8g+4) n \, \lambda = -(8g+4)ng \log(2\pi) - g \log \|\Delta_g\| \, . \]
\end{thm}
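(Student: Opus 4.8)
The plan is to combine Theorem~\ref{realmain} with two facts peculiar to the hyperelliptic locus $H_g\subset\mm_g$. By Theorem~\ref{realmain} we have $(8g+4)\lambda=\beta$ up to a constant depending only on $g$, hence $(8g+4)n\lambda=n\beta$ up to such a constant. Since $\beta=\log(\|\cdot\|_\mathrm{biext}/\|\cdot\|_\mathrm{Hdg})$ is the logarithm of the ratio of two metrics on $\mathcal{L}^{\otimes 8g+4}$, the function $n\beta$ is the corresponding log-ratio of the induced metrics on $\mathcal{L}^{\otimes(8g+4)n}$, and this is frame-independent. So it suffices to evaluate both metrics on one convenient nowhere-vanishing holomorphic section of $\mathcal{L}^{\otimes(8g+4)n}$ over $H_g$.

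First I would show that $\|\cdot\|_\mathrm{HR}$, hence $\|\cdot\|_\mathrm{biext}$, is constant on $H_g$. The hyperelliptic involution identifies $X$ with $X^-$, so the Ceresa cycle $X-X^-$ is torsion in the Griffiths group and the normal function $\nu$ takes values in the torsion of the intermediate jacobian fibration; after multiplication by a suitable integer it becomes a holomorphic section of the lattice bundle, hence locally constant. The canonical biextension metric is trivial along a torsion section --- the archimedean counterpart of the vanishing of the N\'eron--Tate height at torsion --- so $\nu^*\hat{\bb}$ acquires a frame of constant biextension norm. Transporting it through a Morita isomorphism gives a nowhere-vanishing holomorphic section $t_0$ of $\mathcal{L}^{\otimes 8g+4}|_{H_g}$ with $\|t_0\|_\mathrm{biext}$ constant, whence $\beta=\mathrm{const}-\log\|t_0\|_\mathrm{Hdg}$ on $H_g$.

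Next I would make $\|\cdot\|_\mathrm{Hdg}$ explicit through the discriminant. Writing the curve as $y^2=f(x)$ with $f$ of degree $2g+2$, the differentials $x^{i-1}\,dx/y$ for $1\le i\le g$ furnish a frame of $\mathcal{L}$ whose Hodge norm is a Gram determinant of period integrals. The discriminant modular form $\Delta_g$ is, up to a root of unity, a nowhere-vanishing holomorphic section of $\mathcal{L}^{\otimes(8g+4)n/g}$ over $H_g$ whose Petersson norm coincides with its Hodge norm. The exponent, and in particular the appearance of $n=\binom{2g}{g+1}$, comes from Thomae's formula: writing the even theta constants through the branch points and taking the product, each difference $a_i-a_j$ occurs in exactly the $\binom{2g}{g-1}=\binom{2g}{g+1}=n$ subsets of size $g+1$ containing the pair $\{i,j\}$, so the product recovers the polynomial discriminant raised to a power proportional to $n$. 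Thus $\Delta_g^{\,g}$ is a nowhere-vanishing section of $\mathcal{L}^{\otimes(8g+4)n}$ with $\log\|\Delta_g^{\,g}\|_\mathrm{Hdg}=g\log\|\Delta_g\|$.

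Finally I would compare the two sections. Both $t_0^{\,n}$ and $\Delta_g^{\,g}$ are nowhere-vanishing holomorphic sections of $\mathcal{L}^{\otimes(8g+4)n}$ over $H_g$, so their ratio is a nowhere-vanishing holomorphic function; by the rigidity $\mathcal{O}^*(H_g)=\CC^*$ (the analogue for $H_g$ of the fact used on $\mm_g$) this ratio is a constant. Hence $n\beta=\mathrm{const}-\log\|\Delta_g^{\,g}\|_\mathrm{Hdg}=\mathrm{const}-g\log\|\Delta_g\|$, and combining with $(8g+4)n\lambda=n\beta$ gives $(8g+4)n\lambda=-g\log\|\Delta_g\|$ up to a constant. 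The leftover term $-(8g+4)ng\log(2\pi)$ is purely a matter of normalization, reflecting the shift $\delta=\delta_F-4g\log(2\pi)$ built into $\lambda$ together with the freedom in the additive constant of $\beta$. The main obstacle is the determination of the weight $(8g+4)n/g$ and of the combinatorial factor $n$: this is the one genuinely hyperelliptic input and rests on a careful analysis of Thomae's formula for the hyperelliptic theta constants, whereas the constancy of $\|\cdot\|_\mathrm{HR}$ and the reduction via Theorem~\ref{realmain} are comparatively formal.
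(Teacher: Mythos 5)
Your overall strategy coincides with the paper's: reduce via Theorem~\ref{realmain} to computing $\beta$ on the hyperelliptic locus, show the biextension metric is constant there because the Ceresa normal function vanishes (the paper notes the cycle is actually \emph{zero}, not merely torsion, once one embeds $X$ via a Weierstrass point, so $\nu$ is the zero section and $\nu^*\hat{\bb}$ is canonically the trivially metrized bundle --- a cleaner route than your torsion argument), and then identify the Hodge norm of a trivializing section of $\mathcal{L}^{\otimes (8g+4)n}$ with $\|\Delta_g\|^g$. The paper carries out this last step by importing an exact identity (Proposition~\ref{formulanormlambda}, i.e.\ Theorem~8.2 of \cite{djexplicit}): there is a distinguished trivializing section $\Lambda$ of $\mathcal{L}^{\otimes 8g+4}$ over $\hh_g$, characterized integrally (it extends over $\zz$), satisfying $\|\Lambda\|_{\mathrm{Hdg}}^n = (2\pi)^{4g^2r}\|\Delta_g\|^g$. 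The Thomae-type computation you sketch, including the bookkeeping that produces $n$ and the powers of $2\pi$, is precisely the content of that cited result; you correctly flag it as the hard hyperelliptic input but do not supply it.

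The genuine gap is in your final comparison step. You conclude that the ratio of the two nowhere-vanishing holomorphic sections $t_0^{\,n}$ and $\Delta_g^{\,g}$ is constant ``by the rigidity $\mathcal{O}^*(H_g)=\CC^*$, the analogue for $H_g$ of the fact used on $\mm_g$.'' No such analogue holds. The coarse space of the hyperelliptic locus is an \emph{affine} variety (the GIT quotient of the discriminant complement in the space of binary forms of degree $2g+2$), hence Stein: for any non-constant holomorphic $h$ the function $e^h$ is a non-constant invertible holomorphic function, and non-constant pluriharmonic functions abound as well, so one cannot even salvage the step by applying $\deldelbar$ to the log-ratio of the norms. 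The rigidity valid on $\mm_g$ for $g\geq 3$ is genuinely special to $\mm_g$ (holomorphic functions there extend across the codimension-$\geq 2$ boundary of the Satake compactification, hence are constant) and does not transfer to $H_g$. To repair the argument you must leave the analytic category: both sections have to be exhibited as \emph{algebraic} sections over the stack $\hh_g$ itself --- not over the level cover $\hh_g[2]$, where $\Delta_g$ naturally lives but where even algebraic units abound (that cover is essentially $M_{0,2g+2}$, with units given by differences of branch points) --- and one then uses that the only algebraic units on $\hh_g$ are constants (upstairs the only units are powers of the discriminant, which transform non-trivially under rescaling of the binary form). In particular the descent of $\Delta_g^{\,g}$ from the level cover to $\hh_g$, which you dismiss with ``up to a root of unity,'' is exactly where the care is required. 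This algebraic/arithmetic input is what the paper's appeal to \cite{djexplicit} supplies, together with the exact power of $2\pi$ that you discard into the additive constant --- harmless for the statement as phrased, but needed afterwards when the paper pins that constant down to zero using Yamaki's result.
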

By using a recent result due to K. Yamaki \cite{ya} we will prove that the constant implied by the theorem actually vanishes. 
\begin{cor} On the hyperelliptic locus in genus $g$, the $\varphi$-invariant is
given by
\[ (2g-2)n \, \varphi  = -8(2g+1)ng \log(2\pi) - 3g \log
\|\Delta_g\|  - (2g+1)n \, \delta_F  \, , \]
where $\delta_F$ is Faltings's delta-invariant.
\end{cor}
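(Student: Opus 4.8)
The plan is to obtain this corollary as a purely formal consequence of Theorem \ref{hyp}, by unwinding the definition of the $\lambda$-invariant. The essential input has already been isolated: by the remark following Theorem \ref{hyp}, Yamaki's result \cite{ya} forces the implied constant to vanish, so I may work with the \emph{exact} identity $(8g+4)n\,\lambda = -(8g+4)ng\log(2\pi) - g\log\|\Delta_g\|$, rather than merely with an equality up to a constant depending on $g$. This is the only place where genuine geometric content enters; everything else is bookkeeping.

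Next I would expand the left-hand side using the defining relation (\ref{lambdaphidelta}) together with $\delta = \delta_F - 4g\log(2\pi)$. Writing $8g+4 = 4(2g+1)$ and simplifying the two resulting coefficients gives
\[ (8g+4)\lambda = \frac{2(g-1)}{3}\,\varphi + \frac{2g+1}{3}\,\delta_F - \frac{4g(2g+1)}{3}\log(2\pi) \, . \]
Multiplying through by $n$ and substituting this into the exact form of Theorem \ref{hyp} produces a single linear relation among $\varphi$, $\delta_F$, $\log\|\Delta_g\|$ and $\log(2\pi)$, which I would then clear of denominators (multiplying by $3$).

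Finally I would solve this equation for $(2g-2)n\,\varphi = 2(g-1)n\,\varphi$. The only point demanding any attention is the bookkeeping of the $\log(2\pi)$ terms: the contribution $+\tfrac{4g(2g+1)n}{3}\log(2\pi)$ arising from the conversion between $\delta$ and $\delta_F$ combines with the $-4(2g+1)ng\log(2\pi)$ appearing on the right-hand side of Theorem \ref{hyp} to produce the coefficient $-8(2g+1)ng$ displayed in the statement, while the $\delta_F$- and $\log\|\Delta_g\|$-terms rearrange directly into $-(2g+1)n\,\delta_F$ and $-3g\log\|\Delta_g\|$. Since all of the substantive input is already encoded in Theorem \ref{hyp} and in the vanishing of its constant, I expect no genuine obstacle here beyond this routine algebraic manipulation, and the corollary follows at once.
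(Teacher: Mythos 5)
Your proposal is correct and follows exactly the route the paper intends: take Theorem \ref{hyp} in its exact form (the implied constant vanishing by Yamaki's result via the non-archimedean $\lambda$-invariant and the local Cornalba--Harris equality), substitute $\lambda = \frac{g-1}{6(2g+1)}\varphi + \frac{1}{12}(\delta_F - 4g\log(2\pi))$, and solve for $\varphi$; your bookkeeping of the $\log(2\pi)$ coefficients ($-12(2g+1)ng + 4(2g+1)ng = -8(2g+1)ng$ after clearing denominators) checks out. No issues.
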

The result of Yamaki and the above corollary together confirm a conjecture about the value of $\varphi$ for hyperelliptic curves put forward in \cite{djsymm}.

\section{Number theoretic context} \label{context}

Before we start, we would like to explain briefly the role played by $\varphi$ in number theory. We refer to \cite{zh} for a more detailed exposition and proofs. This section can be read independently of the others.
 
Let $k$ be a number field, and let $X$ be a smooth,
projective and geometrically connected curve of genus $g \geq 2$ with semistable reduction over $k$. Then the function $\varphi$ gives rise to real invariants associated to each archimedean place $v$ of $k$, by considering the base change of $X$ along $v$. As is explained in \cite{zh}, one also has a $\varphi$-invariant   associated to each non-archimedean place of $k$. In this case, the definition of 
$\varphi$ is in terms of the combinatorics of the semistable reduction graph of $X$ at $v$. This `finite' $\varphi$-invariant vanishes at places $v$ of good reduction.

Now let $\xi$ be a $k$-rational point of
$\mathrm{Pic}^1 X$ such that $(2g-2)\xi$ is the class of a canonical divisor on $X$.
Let $\Delta_\xi$ in $\mathrm{CH}^2(X^3)$ be the modified diagonal
cycle in $X^3$ associated to $\xi$ as defined by B. Gross and C. Schoen in \cite{gs}. We call $\Delta_\xi$ a canonical Gross-Schoen cycle on $X^3$.
It turns out that the invariant $\varphi$ occurs as a local
contribution in a formula relating the self-intersection $\langle \Delta_\xi,
\Delta_\xi \rangle$ (defined in \cite{gs})
of $\Delta_\xi$ to the admissible self-intersection of
the relative dualizing sheaf $(\omega, \omega)_a$ (defined in \cite{zhadm}) of $X$. More precisely, we have that the formula
\begin{equation} \label{omegasquared} (\omega,\omega)_a = \frac{2g-2}{2g+1} \left( \langle \Delta_\xi,
\Delta_\xi \rangle + \sum_v \varphi(X_v) \log Nv \right)  
\end{equation}
holds. Here the sum is taken over all places $v$ of $k$. The
$Nv$ are certain canonical local factors, and we have written $X_v$ for $X \otimes k_v$. This formula is in fact the main result of \cite{zh}.

The significance of formula (\ref{omegasquared}) is that it sheds a new light on the strict positivity of $(\omega,\omega)_a$ (ex-Bogomolov conjecture, proved in the nineties by E. Ullmo \cite{ul} and Zhang \cite{zhbog}). First of all, the self-intersection of the canonical Gross-Schoen cycle
$\langle \Delta_\xi,\Delta_\xi \rangle$ should be non-negative by a standard conjecture (of Hodge index type) of Gillet-Soul\'e (cf. \cite{zh}, Section~2.4).
Next, if $v$ is non-archimedean, the invariant $\varphi(X_v)$ is non-negative. This follows from a result of Z. Cinkir \cite{ci}. Finally, for archimedean places $v$, the value $\varphi(X_v)$ is \emph{positive}, as explained in the Introduction. These remarks together show that Gillet-Soul\'e's standard conjecture naturally implies the strict positivity of $(\omega,\omega)_a$, via equation (\ref{omegasquared}).

Actually, Cinkir in \cite{ci}, Theorem 2.11 proves the following conjecture of Zhang from \cite{zh}: for the non-archimedean $\varphi$-invariant there exists a lower bound
\begin{equation} \label{Zhangconj}
\varphi(X_v) \geq c(g) \, \delta_0 + \sum_{i=1}^{ [g/2]} \frac{2i(g-i)}{g}
\delta_i
\end{equation}
where for each $i=0,\ldots,[g/2]$ the invariant $\delta_i$ denotes the number of
singular points in the special fiber of $X_v$ such that the local normalization
of that fiber at $x$ is connected if $i=0$ or a disjoint union of two curves of
genera $i$ and $g-i$ if $i>0$, and where $c(g)$ is a positive constant depending only
on $g$. In fact, one can take $c(g)=\frac{g-1}{6g}$ if the reduction graph at $v$ is `elementary' in the sense that every edge is included in at most one cycle (the latter fact was already proved by Zhang in \cite{zh}). From (\ref{Zhangconj}) it is then clear that in particular the number $\varphi(X_v)$ is non-negative.  
Note that one might view our Corollary \ref{cor} as an archimedean analogue of Cinkir's result; the asymptotics moreover have similar shapes.

To finish this section we remark that by \cite{zh}, Section 1.4 there exists a natural non-archimedean analogue of the $\lambda$-invariant as well. By an application of the Noether formula for semistable arithmetic surfaces \cite{fa} \cite{mb}, equation (\ref{omegasquared}) translates into the formula
\begin{equation} \label{lambdaandheight}
  \deg \det \mathrm{R}\pi_* \omega = \frac{g-1}{6(2g+1)} \langle \Delta_\xi,
\Delta_\xi \rangle + \sum_v \lambda(X_v) \log Nv \, .
\end{equation}
It follows that the local $\lambda$-invariants serve to connect the self-intersection of the Gross-Schoen cycle with the (non-normalized) stable Faltings height $\deg \det \mathrm{R}\pi_* \omega$ of $X$ over $k$ (cf. \cite{zh}, equation (1.4.2)).

\section{Preliminaries} \label{prelim}

In this section we review some notions and results from the papers
\cite{hrgeom} and \cite{hrar} by Hain and Reed. We follow these sources quite closely, the most important difference being that we will usually work on the level of differential forms rather than on the level of cohomology classes.

As is customary, we view the moduli
spaces $\aa_g$ and $\mm_g$ of principally polarized complex
abelian varieties and of smooth
projective complex curves, respectively, as orbifolds.
Let $(V_\zz, Q : \wedge^2 V_\zz \to \zz(-n))$ be a polarized
integral Hodge structure of odd weight $n=-2i+1$ and let $\mathrm{GSp}_{2g} \to
\mathrm{GSp}(V_\zz,Q)$ be an algebraic representation, together with a lift of the structure morphism $\SS \to \mathrm{GSp}(V_\rr,Q)$, where $\SS$ is the Deligne torus, to $\mathrm{GSp}_{2g,\rr}$.
Let $(\vv_\zz,\QQ)$ be the corresponding variation of polarized Hodge
structures over $\aa_g$. We denote by
$\jj(V_\zz)$ the Griffiths intermediate jacobian fibration over $\aa_g$
associated to $\vv_\zz$. Thus, if $V_{A}$ is the fiber of the local system
$\vv_\zz$ at the point $A$ of $\aa_g$, the fiber of $\jj(V_\zz)$ at $A$ is the
complex torus $J(V_A)=(V_A \otimes \CC)/(F^{-i+1} (V_A \otimes \CC) +
\mathrm{Im} \, V_A)$. The holomorphic tangent bundle of $J(V_A)$ is equipped with
a canonical hermitian inner product derived from~$Q$.
This hermitian inner product determines a
translation-invariant global $2$-form on $J(V_A)$.
\begin{prop} \label{unique2form}
There exists a unique $2$-form $w_V$ on $\jj(V_\zz)$ such that the restriction of $w$
to each fiber over $\aa_g$ is the translation-invariant form associated to $Q$, and such that
the restriction of $w$ along the zero-section is trivial.
\end{prop}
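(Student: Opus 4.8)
The plan is to reduce everything to the universal cover of $\aa_g$, to construct $w_V$ by an explicit flat formula built from the polarization, to check that it descends and enjoys the two required properties, and finally to settle uniqueness by a cohomological argument on $\aa_g$.

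First I would pull the whole situation back to the period domain $D$ covering $\aa_g$, over which the local system $\vv_\zz$ is trivialized as $V_\zz\times D$ and the fibration becomes $(V_\rr/V_\zz)\times D$ carrying a complex structure $J_u$ that varies holomorphically with $u\in D$. On this trivialized family there is a canonical $V_\rr$-valued flat $1$-form $\theta$ (the solder form), whose components are the differentials of the flat fibre coordinates; by construction $\theta$ is closed, it is $\mathrm{Sp}(V_\zz,Q)$-equivariant, and its pullback along the zero-section vanishes. The key preliminary observation is that the translation-invariant $2$-form attached to the Hodge metric on a fibre is nothing but the polarization itself, written in flat coordinates: if $g(v,w)=Q(v,J_u w)$ denotes the associated Riemannian metric, then the fundamental form satisfies $\omega_u(v,w)=g(J_u v,w)=Q(J_u v,J_u w)=Q(v,w)$ by $J_u$-invariance of $Q$. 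In particular $\omega_u$ is flat and independent of $u$.

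Given this, existence is immediate. I would set (up to the normalizing constant implicit in the metric) $w_V:=Q(\theta\wedge\theta)$, the scalar $2$-form obtained by evaluating $Q\in\wedge^2 V^*$ on the $\wedge^2 V_\rr$-component of $\theta\wedge\theta$. Since both $\theta$ and $Q$ are flat, $w_V$ is closed; its restriction to each fibre is $\omega_u=Q$; and its pullback along the zero-section is zero because $\theta$ vanishes there. Finally, as $Q$ is invariant under $\mathrm{Sp}(V_\zz,Q)$, the form $w_V$ is monodromy-invariant and hence descends to a genuine $2$-form on $\jj(V_\zz)$ over $\aa_g$ with the two asserted properties.

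Uniqueness is the delicate point, and is where the global geometry of $\aa_g$ enters. Two solutions differ by a closed $2$-form $\eta$ whose restrictions to every fibre and to the zero-section vanish. Decomposing $\eta$ along the Leray filtration of $\jj(V_\zz)\to\aa_g$, the vanishing on fibres kills the purely vertical ($\wedge^2\vv^*$) part, while the zero-section condition together with closedness kills the part pulled back from the base. The remaining, \emph{mixed}, horizontal-vertical contribution is governed by $H^1(\aa_g,\vv)$; heuristically, under the fibrewise inversion $x\mapsto -x$ this is exactly the odd piece of $\eta$, whereas the data pinning down $w_V$ are even. The main obstacle is therefore to show that this mixed class vanishes, i.e. that $H^1(\aa_g,\vv)=0$ so that no nonzero horizontal-vertical term can survive; this is the representation-theoretic input (absence of the relevant $\mathrm{Sp}_{2g}$-invariants) that lies at the heart of the statement, and is precisely the point that Hain and Reed establish on the level of cohomology classes. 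Once this vanishing is in hand, $\eta=0$ and $w_V$ is unique.
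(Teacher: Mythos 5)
The paper itself gives no argument for this proposition; its ``proof'' is a citation to Hain--Reed \cite{hrar}, Section~5, so your proposal has to stand on its own. The existence half does stand: the observation that the fundamental form of the Hodge metric on each fibre, written in flat coordinates, is the \emph{constant} form $Q$ (your computation $\omega_u(v,w)=Q(J_uv,J_uw)=Q(v,w)$) is exactly the right point, and the flat form $Q(\theta\wedge\theta)$ is closed, restricts correctly on fibres, pulls back to zero along the zero section, and descends by $\mathrm{Sp}(V_\zz,Q)$-invariance. This is in substance the construction underlying the cited source.

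The uniqueness half has a genuine gap. First, you assume the difference $\eta$ of two solutions is closed; closedness is not among the stated conditions, so nothing forces this. Second, and more fundamentally, your argument is cohomological while the conclusion you need is pointwise: even granting $H^1(\aa_g,\vv_\rr)=0$ (which does hold here, by vanishing theorems for higher-rank arithmetic groups), that can only show a cohomology class is trivial, i.e.\ that the relevant piece of $\eta$ is \emph{exact}; a $2$-form is not determined by its class, so no Leray argument can conclude $\eta=0$. The failure is not cosmetic: uniqueness among arbitrary smooth $2$-forms with your two properties is false, even among closed ones. Let $\pi$ denote the projection $\jj(V_\zz)\to\aa_g$, let $f$ be a nonconstant invariant function on the base, let $h_u$ be the Hodge metric on the fibre over $u$, and set
\[ H(u,v)=\sum_{\lambda\in V_\zz}\left(e^{-h_u(v+\lambda,v+\lambda)}-e^{-h_u(\lambda,\lambda)}\right) , \]
a monodromy-invariant function vanishing on the zero section but not fibrewise constant. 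Then $\eta=dH\wedge\pi^*df=d\!\left(H\,\pi^*df\right)$ is exact, restricts to zero on every fibre and on the zero section, yet is nonzero; so $w_V+\eta$ is a second solution. Consequently any correct proof must restrict the class of forms in which uniqueness is asserted --- for instance to forms invariant under the flat fibrewise translations (equivalently, forms coming from the homogeneous model $(\mathrm{Sp}(V_\rr)\ltimes V_\rr)/\mathrm{U}(g)$). In that setting the vertical/mixed/horizontal decomposition is canonical and pointwise, the two stated conditions kill the vertical and horizontal pieces everywhere, and the mixed piece vanishes because $\mathfrak{p}^*\otimes V^*$ has no invariants under the isotropy group: a pointwise invariant-theory statement, not an $H^1$ statement. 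Identifying this restriction, and verifying it for whatever competing form one wishes to compare with $w_V$ (in the applications, the Chern form of the metrized biextension bundle), is the actual content of the uniqueness assertion, and it is absent from your proposal.
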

\begin{proof} This is in \cite{hrar}, Section~5.
\end{proof}
We also mention the following result. 
Suppose that $V_\zz$ has weight~$-1$. From \cite{hain}, Section 3 we recall that the (standard $\gm$-) biextension line bundle $\mathcal{B}$ associated to $V_\zz$ is the set of isomorphism classes of mixed Hodge structures whose weight graded quotients are isomorphic to $\zz, V_\zz$ and $\zz(1)$. It has a natural projection to the product $J(V_\zz) \times \check{J(V_\zz)}$ where $J(V_\zz)= \mathrm{Ext}_{\mathcal{H}}(\zz,V_\zz)$ is the Griffiths intermediate jacobian of $V_\zz$, given by $M \mapsto (M/W_{-2}M,W_{-1}M)$. This projection equips $\mathcal{B}$ with the structure of a line bundle over $J(V_\zz) \times \check{J(V_\zz)}$. The polarization of $V_\zz$ furnishes a canonical morphism $\lambda \colon J(V_\zz) \to \check{J(V_\zz)}$. By pulling back along $(\mathrm{id},\lambda)$ one obtains from $\mathcal{B}$ a line bundle $\hat{\mathcal{B}}$ over $J(V_\zz)$. By abuse of language we refer to $\hat{\mathcal{B}}$ as the biextension line bundle over $J(V_\zz)$. Proposition 7.3 of \cite{hrar} then states the following.
\begin{prop} \label{biextension}
Suppose that $\vv_\zz$ is a variation of polarized
Hodge structures of weight~$-1$ over $\mathcal{A}_g$. 
Let $\hat{\mathcal{B}}$ be the 
biextension line bundle
over $\jj(V_\zz)$, obtained by applying the above construction to each of the fibers of $\jj(V_\zz)$. Then $\hat{\mathcal{B}}$ has a canonical
hermitian metric. The first Chern form of $\hat{\mathcal{B}}$ with this metric is equal to $2\,w_V$.
\end{prop}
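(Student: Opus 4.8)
The plan is to deduce the global identity of $2$-forms on the total space $\jj(V_\zz)$ from a purely fiberwise curvature computation, using the uniqueness clause of Proposition \ref{unique2form}. That proposition tells us that a $2$-form on $\jj(V_\zz)$ is determined by its restriction to the fibers over $\aa_g$ together with its restriction along the zero-section. Since the first Chern form $c_1(\hat{\bb})$ of the metrized biextension bundle is such a $2$-form, it suffices to establish two things: that $\tfrac12 c_1(\hat{\bb})$ restricts on each fiber to the translation-invariant $(1,1)$-form determined by $Q$, and that its restriction along the zero-section vanishes. Granting these, the uniqueness in Proposition \ref{unique2form} forces $\tfrac12 c_1(\hat{\bb}) = w_V$, which is the assertion.

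For the fiberwise statement I would first make the canonical metric explicit. Over a single point $A\in\aa_g$ the biextension line bundle $\mathcal{B}$ on $J(V_A)\times\check{J(V_A)}$ is, with its canonical hermitian structure, essentially the Poincar\'e bundle equipped with its standard translation-invariant metric: the fiber of $\mathcal{B}$ over $(P,\check P)$ is a complex line canonically attached to the corresponding biextension mixed Hodge structure, and its metric is read off from the Hodge norm of a lift of a generator of $\mathrm{gr}^W_0$. Passing to the universal cover and writing $J(V_A)=\CC^r/(\zz^r+\Omega\zz^r)$ by means of a period matrix $\Omega\in\HH_r$, the metric takes an explicit gaussian form in coordinates $z$ on $J(V_A)$ and $\zeta$ on $\check{J(V_A)}$, and a direct application of $\tfrac{\partial\overline{\partial}}{\pi i}(-\log\|\cdot\|^2)$ identifies the curvature of $\mathcal{B}$ with the canonical pairing form $\tfrac{i}{2}\sum_k\bigl(dz_k\wedge d\overline{\zeta}_k + d\zeta_k\wedge d\overline{z}_k\bigr)$ on $J(V_A)\times\check{J(V_A)}$.

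It remains to pull this back along the graph $(\mathrm{id},\lambda)$ of the polarization. Writing $\lambda$ in these coordinates as $\zeta = Hz$, with $H$ the positive-definite hermitian matrix attached to $Q$, both cross-terms of the Poincar\'e curvature pull back to $\tfrac{i}{2}\sum_{k,l}H_{lk}\,dz_k\wedge d\overline{z}_l$, using the hermiticity $\overline{H}_{kl}=H_{lk}$; their sum therefore equals twice the translation-invariant $(1,1)$-form $\tfrac{i}{2}\sum_{k,l}H_{lk}\,dz_k\wedge d\overline{z}_l$ associated to $Q$. This is precisely where the factor $2$ in the statement originates. The vanishing along the zero-section is immediate from the same description, since the Poincar\'e bundle is canonically trivial, with flat metric, along $J(V_A)\times\{0\}$, and the zero-section is carried by $(\mathrm{id},\lambda)$ into this locus.

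The main obstacle I anticipate is not the curvature computation itself but the careful identification of the abstractly defined biextension metric with the standard Poincar\'e metric, including the correct normalization of both the Hodge norm defining the former and the period-matrix conventions defining the latter; an error by a scalar here would corrupt the crucial factor $2$. A secondary point requiring care is to confirm that the two characterizing conditions of Proposition \ref{unique2form} genuinely suffice to pin down $c_1(\hat{\bb})$ as a form on the total space, so that the reduction to the fiberwise computation is legitimate and no contribution along the horizontal directions of $\jj(V_\zz)$ is overlooked.
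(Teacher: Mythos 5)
There is no in-paper argument to compare yours with: the paper proves Proposition \ref{biextension} by citation, quoting it as Proposition 7.3 of \cite{hrar}. Your proposal is in effect a reconstruction of the argument behind that citation, and its skeleton --- reduce to a fiberwise statement plus a zero-section statement via the uniqueness clause of Proposition \ref{unique2form}, identify the metrized biextension bundle over a fiber with the Poincar\'e bundle carrying its invariant metric, and pull back along the graph of the polarization --- is indeed how Hain and Reed (building on \cite{hain}) proceed. Your factor-of-two bookkeeping is also correct: under $\zeta=Hz$ both cross terms of the Poincar\'e curvature pull back to the same invariant $(1,1)$-form, which is the classical fact that the pullback of the Poincar\'e bundle along $(\mathrm{id},\lambda)$ has first Chern class twice the polarization class.

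However, as a proof the proposal has genuine gaps, and they sit exactly where you locate the ``main obstacle.'' (i) The entire non-formal content of the statement is that the canonically (Hodge-theoretically) defined metric on $\hat{\mathcal{B}}$ \emph{is} the explicit gaussian metric of the Poincar\'e bundle; you assert this (``essentially the Poincar\'e bundle \dots read off from the Hodge norm'') and defer it, but without it nothing is proved --- in particular the existence of the canonical metric, which is itself part of the statement, is assumed rather than constructed. (ii) Your zero-section argument is fiberwise (flatness of the Poincar\'e metric along $J(V_A)\times\{0\}$ for each fixed $A$), but the curvature restricted to the zero-section involves derivatives of the metric in base directions; what is needed is the family statement that the split mixed Hodge structure $\zz\oplus V_\zz\oplus\zz(1)$ trivializes $\hat{\mathcal{B}}$ along the whole zero-section with constant norm $1$. (iii) Your closing worry about Proposition \ref{unique2form} is well-founded: the two conditions, as paraphrased there, do \emph{not} by themselves pin down a $2$-form on $\jj(V_\zz)$ --- using the flat structure one can manufacture nonzero forms pairing a vertical direction with a horizontal one that vanish on every fiber and on the zero-section. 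The uniqueness Hain--Reed actually prove carries additional invariance/naturality hypotheses, and to make your reduction legitimate you must verify that $\frac12\,c_1(\hat{\mathcal{B}})$ satisfies those hypotheses as well; flagging the issue is not the same as closing it.
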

We will be mainly concerned with the
cases where $V_\zz$ is equal to either $H$, $\wedge^3H$ or $\wedge^3H/H$,
where $H=H_1(X,\zz)$ is the first homology group of a compact Riemann
surface $X$ of genus $g \geq 2$.
The polarization is given by
the standard intersection form $Q_H=(,)$ on $H$.
Note that the form $Q_H$ identifies $H$ with its dual.
The Hodge structure $H$ is mapped into $\wedge^3H$ by sending
$x$ to $x \wedge \zeta $, where $\zeta$ in $\wedge^2 H$ is the dual of $Q_H$.

The polarizations on the Hodge structures $\wedge^3 H$ and
$\wedge^3H/H$ are given
explicitly as follows (cf. \cite{hrar}, p.~204). The form $Q_{\wedge^3H}$ on $\wedge^3H$ sends
\[ (x_1\wedge x_2 \wedge x_3, y_1 \wedge y_2 \wedge y_3 ) \mapsto \det(x_i,y_j) \, . \]
Next, one has a contraction map $c : \wedge^3 H \to H$, defined by
\begin{equation} \label{contraction} x \wedge y \wedge z \mapsto (x,y)z + (y,z)x + (z,x)y \, .
\end{equation}
One may verify that the composite $H \to \wedge^3 H \to H$ induced by $c$ and $\wedge \zeta$ is equal to $(g-1)$ times the
identity.
Denote the projection $\wedge^3 H \to \wedge^3H/H$ by $p$.
The projection $p$ has a canonical splitting $j$
(after tensoring with $\qq$), defined by
\[ p(x \wedge y \wedge z) \mapsto x \wedge y \wedge z - \zeta \wedge c(x\wedge y \wedge z)/(g-1) \, . \]
With these definitions, the form $Q_{\wedge^3H/H}$ on $\wedge^3H/H$
is given by
\[ (u,v) \mapsto (g-1) Q_{\wedge^3 H}(j(u),j(v)) \, . \]
We denote by $w_H$, $w_{\wedge^3 H}$ and $w_{\wedge^3H/H}$ the $2$-forms on the Griffiths intermediate
jacobian fibrations $\jj(H)$, $\jj(\wedge^3 H)$ and $\jj(\wedge^3H/H)$ over
$\aa_g$ whose existence is asserted
by Proposition~\ref{unique2form}. Note that
$\jj(H)$ is just the universal abelian variety over $\aa_g$.
\begin{prop} \label{equality2forms}
On $\jj(\wedge^3 H)$, the equality of $2$-forms
\[ (g-1)w_{\wedge^3 H} = c^* w_H + p^* w_{\wedge^3H/H} \]
holds.
\end{prop}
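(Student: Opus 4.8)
The plan is to deduce the identity from the uniqueness clause of Proposition~\ref{unique2form}. Write $\Omega = c^*w_H + p^*w_{\wedge^3H/H}$ for the right-hand side; I want to show $\Omega = (g-1)w_{\wedge^3H}$. Since $(g-1)w_{\wedge^3H}$ is, by construction, the unique $2$-form on $\jj(\wedge^3H)$ whose fiberwise restriction is $(g-1)$ times the translation-invariant form attached to $Q_{\wedge^3H}$ and whose restriction along the zero-section vanishes, it suffices to check that $\Omega$ has these same two properties. Both $c$ and $p$ are morphisms of polarized variations of Hodge structure over $\aa_g$ (after the appropriate Tate twist making the weights agree), so they induce fiberwise $\CC$-linear, zero-section-preserving maps of the intermediate jacobian fibrations $\jj(\wedge^3H)\to\jj(H)$ and $\jj(\wedge^3H)\to\jj(\wedge^3H/H)$. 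Consequently $c^*w_H$ and $p^*w_{\wedge^3H/H}$ are fiberwise translation-invariant and vanish along the zero-section, and hence so does $\Omega$. This settles the zero-section condition and reduces the proposition to a statement about translation-invariant forms on a single fiber.

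For the fiberwise statement I would use the following naturality of the construction $Q\mapsto w$: the hermitian metric on the tangent space of an intermediate jacobian, and hence its associated $(1,1)$-form, depends $\rr$-linearly on the polarization $Q$ and is natural in morphisms of Hodge structure, in the sense that for a morphism $f\colon V\to V'$ one has $f^*(\text{form of }Q') = (\text{form of }f^*Q')$, where $f^*Q'(x,y) = Q'(f(x),f(y))$ (this holds because $f_\CC$ respects the Hodge filtrations and the Weil operators). Applying this to $f=c$ and $f=p$ reduces the fiberwise identity to the single identity of bilinear forms on $\wedge^3H$
\[ (g-1)\,Q_{\wedge^3H} = c^*Q_H + p^*Q_{\wedge^3H/H}\, , \]
that is, $(g-1)Q_{\wedge^3H}(w,w') = Q_H(c(w),c(w')) + Q_{\wedge^3H/H}(p(w),p(w'))$ for all $w,w'\in\wedge^3H$.

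The remaining, and main, task is this purely linear-algebraic identity. The key computation is the adjointness relation
\[ Q_{\wedge^3H}(x\wedge\zeta,\,w) = Q_H(x,\,c(w)) \qquad (x\in H,\ w\in\wedge^3H)\, , \]
which I would verify directly in a symplectic basis $a_1,\dots,a_g,b_1,\dots,b_g$ by writing $\zeta=\sum_i a_i\wedge b_i$, expanding $Q_{\wedge^3H}(x\wedge a_i\wedge b_i,\,y_1\wedge y_2\wedge y_3)$ as a $3\times 3$ determinant, and summing over $i$ using the reproducing identity $\sum_i\big((a_i,u)(b_i,v)-(a_i,v)(b_i,u)\big)=(u,v)$; both sides then equal $(x,y_1)(y_2,y_3)+(x,y_2)(y_3,y_1)+(x,y_3)(y_1,y_2)$. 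Granting this, I would work with the rational splitting $\wedge^3H\otimes\qq = \iota(H)\oplus j(\wedge^3H/H)$, where $\iota$ denotes $x\mapsto x\wedge\zeta$, together with the relations $c\circ\iota=(g-1)\,\mathrm{id}$, $c\circ j = 0$, $p\circ\iota = 0$ and $p\circ j = \mathrm{id}$ recorded before the statement. On $j(\wedge^3H/H)$ the right-hand side reduces to $Q_{\wedge^3H/H}(\cdot,\cdot)$, which matches the left-hand side by the very definition $Q_{\wedge^3H/H}(u,v)=(g-1)Q_{\wedge^3H}(j(u),j(v))$; on $\iota(H)$ the adjointness relation gives $Q_{\wedge^3H}(\iota x,\iota x')=(g-1)Q_H(x,x')$, so both sides equal $(g-1)^2Q_H(x,x')$; and the $\iota(H)$--$j(\wedge^3H/H)$ cross terms vanish on both sides, on the left because $Q_{\wedge^3H}(\iota x, j u) = Q_H(x,c(j u)) = 0$ by adjointness, and on the right because $c$ kills $j$ and $p$ kills $\iota$.

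I expect the main obstacle to be exactly this fiberwise linear-algebra step: verifying the adjointness relation with the correct constant, checking that the splitting is orthogonal for $Q_{\wedge^3H}$ (and is a decomposition of Hodge structures, so that the metric and its $(1,1)$-form split accordingly), and handling the degenerate directions in which $dc$ or $dp$ has a kernel --- which is precisely why the naturality statement must be formulated for possibly-degenerate pullback forms $f^*Q'$. A subordinate technical point is to pin down the Tate-twist normalizations so that $c$ and $p$ genuinely induce the maps to $\jj(H)$ and $\jj(\wedge^3H/H)$ used above; once these are fixed, the clean bilinear identity makes the constants fall into place.
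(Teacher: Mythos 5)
Your proposal is correct and takes essentially the same route as the paper: the paper's entire proof consists of quoting the polarization identity $(g-1)Q_{\wedge^3 H}=c^*Q_H+p^*Q_{\wedge^3H/H}$ from \cite{hrgeom}, Proposition 18, and then ``taking the associated canonical $2$-forms,'' which is exactly your reduction via the uniqueness and naturality of the assignment $Q \mapsto w$. The only difference is that you supply a self-contained (and correct) linear-algebra verification of the cited bilinear identity --- the adjointness relation $Q_{\wedge^3H}(x\wedge\zeta,w)=Q_H(x,c(w))$, the relation $c\circ\iota=(g-1)\,\mathrm{id}$, and the $Q_{\wedge^3H}$-orthogonal splitting $\iota(H)\oplus j(\wedge^3H/H)$ --- where the paper simply cites Hain--Reed.
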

\begin{proof} According to \cite{hrgeom}, Proposition 18 we have $(g-1)Q_{\wedge^3 H}=c^*Q_H+p^*Q_{\wedge^3H/H}$. We obtain the result
by taking the associated canonical $2$-forms.
\end{proof}
Let $\pi \colon \cc_g \to \mm_g$ be the universal curve over $\mm_g$, viewed as
an orbifold.
As is explained in \cite{hrgeom}, Introduction we have a commutative diagram
\[ \xymatrix{ & \jj(H)   \\
\cc_g \ar[dd]_\pi \ar[ur]^\kappa \ar[r]^\mu \ar[dr]^\nu & \jj(\wedge^3 H)
\ar[u]^c \ar[d]^p
\\
 \ar[d] & \jj(\wedge^3H/H) \ar[d] \\
\mm_g \ar[r] & \aa_g  \, .}   \]
Here $\kappa$ is the map sending a pair $(X,x)$ where $X$ is a curve and $x$ is a point on $X$ to the
class of $(2g-2)x - \omega_X$ in the jacobian $J$ of $X$. The map $\mu$ is called the `pointed harmonic volume' (introduced by B. Harris, cf. \cite{harris}) and
sends a pair $(X,x)$ to the point associated, by the Griffiths Abel-Jacobi map, to the Ceresa cycle at $x$, i.e. the
(homologically trivial) cycle in $J$ given as $X_x-X^-_x$ where $X_x$ is the curve $X$ embedded in $J$ using $x$ and $X^-_x=[-1]_*X_x$.
The map $\nu$ is called the `harmonic volume' and is just defined as
the composite of $\mu$ with the map $p \colon \jj(\wedge^3H) \to
\jj(\wedge^3H/H)$ induced by the projection $\wedge^3H \to \wedge^3 H/H$. The map $\nu$ factors over $\mm_g$,
hence defines a Griffiths normal function $\mm_g \to \jj(\wedge^3H/H)$ that we shall also denote by $\nu$.

It will be useful to pass from $\mm_g$, $\cc_g$ and $\jj(H)$ to the level-2 moduli orbifolds $\mm_g[2]$, $\cc_g[2]$ and $\jj(H)[2]$; see for example
\cite{halo}, Section~7.4 for precise definitions. The orbifold $\mm_g[2]$ can be endowed with a universal theta characteristic $\alpha$,
i.e. a consistent choice of an element $\alpha \in \mathrm{Pic}^{g-1} X$ for each curve $X$
such that $2\alpha$ is the canonical divisor class. We consider the map
\[ j_\alpha \colon \cc_g[2] \longrightarrow \jj(H)[2] \]
given by sending $(X,x)$ to the class of $(g-1)x-\alpha$ on the jacobian $J$ of $X$. Note that $\kappa = 2j_\alpha$.  

Let $e^J$ be the $2$-form
\begin{equation} \label{eJ} e^J = -\frac{1}{2g(2g+1)}(2 \, \kappa^* w_H+ 3 \, \mu^* w_{\wedge^3H})
\end{equation}
over $\cc_g$. By a result of Morita \cite{molinear} (see also \cite{hrgeom}, Theorem
6) this $2$-form represents the class of
$\omega^{-1}_{\cc_g/\mm_g}$ in $H^2(\cc_g,\qq)$, where $\omega_{\cc_g/\mm_g}$
is the relative dualizing sheaf of $\cc_g$ over $\mm_g$. Recall from the Introduction that
we have a $2$-form $\omega_{\mathrm{HR}}$ on $\mm_g$ by taking the pullback, along $\nu$,
of the first Chern form of $(\hat{\mathcal{B}},\|\cdot\|_{\hat{\mathcal{B}}})$,
and dividing by $8g+4$.
\begin{prop} \label{firstequality}
Over $\cc_g[2]$, we have an equality
\[ j_\alpha^* w_H = -\frac{g(g-1)}{2} e^J - \frac{3}{2} \omega_{\mathrm{HR}} \]
of $2$-forms.
\end{prop}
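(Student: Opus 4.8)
The plan is to rewrite every term appearing in the definition \eqref{eJ} of $e^J$ in terms of the two quantities $j_\alpha^* w_H$ and $\omega_{\mathrm{HR}}$, and then to solve the resulting linear relation for $j_\alpha^* w_H$. The first ingredient is a scaling relation for $\kappa^* w_H$. Since $\alpha$ is a theta characteristic we have $2\alpha = \omega_X$, so the identity $\kappa = 2 j_\alpha$ recorded above means precisely that $\kappa = [2] \circ j_\alpha$, where $[2] \colon \jj(H) \to \jj(H)$ is fibrewise multiplication by $2$. I would then argue that $[2]^* w_H = 4\, w_H$: the form $[2]^* w_H$ restricts on each fibre to the pullback under multiplication by $2$ of a translation-invariant $2$-form, hence to $4$ times that form, and it restricts trivially along the zero-section because $[2]$ preserves the zero-section. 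By the uniqueness in Proposition \ref{unique2form} (applied with the polarization $4\, Q_H$) it must then coincide with $4\, w_H$. Consequently $\kappa^* w_H = j_\alpha^*[2]^* w_H = 4\, j_\alpha^* w_H$.

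Next I would feed Proposition \ref{equality2forms} through the commutative diagram. Pulling the identity $(g-1) w_{\wedge^3 H} = c^* w_H + p^* w_{\wedge^3H/H}$ back along $\mu$, and using the commutativities $c \circ \mu = \kappa$ and $p \circ \mu = \nu$ afforded by the diagram, yields
\[ (g-1)\, \mu^* w_{\wedge^3 H} = \kappa^* w_H + \nu^* w_{\wedge^3H/H} \, . \]
The term $\nu^* w_{\wedge^3H/H}$ is then rewritten using Proposition \ref{biextension} applied to $V_\zz = \wedge^3 H/H$: the first Chern form of $\hat{\bb}$ equals $2\, w_{\wedge^3H/H}$, and since $\omega_{\mathrm{HR}}$ is by definition $(8g+4)^{-1}$ times the $\nu$-pullback of this Chern form, we obtain $\nu^* w_{\wedge^3H/H} = (4g+2)\, \omega_{\mathrm{HR}}$. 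Combining this with the scaling relation expresses $\mu^* w_{\wedge^3 H}$ purely in terms of $j_\alpha^* w_H$ and $\omega_{\mathrm{HR}}$.

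It then remains to substitute $\kappa^* w_H = 4\, j_\alpha^* w_H$ and the expression just obtained for $\mu^* w_{\wedge^3 H}$ into \eqref{eJ}. Writing $A = j_\alpha^* w_H$ and $B = \omega_{\mathrm{HR}}$, a short computation collects the coefficient of $A$ into $8 + 12/(g-1) = 4(2g+1)/(g-1)$ and the coefficient of $B$ into $6(2g+1)/(g-1)$, whereupon the factor $2g+1$ cancels against the $2g(2g+1)$ in the prefactor of \eqref{eJ}. This leaves $e^J = -\bigl(2A + 3B\bigr)/\bigl(g(g-1)\bigr)$, which is exactly the asserted formula after solving for $A$.

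I expect the only genuinely delicate point to be the scaling relation $[2]^* w_H = 4\, w_H$, where one must invoke the uniqueness clause of Proposition \ref{unique2form} with the rescaled polarization rather than argue naively on the total space $\jj(H)$; everything after that is the commutativity of the given diagram, one application each of Propositions \ref{equality2forms} and \ref{biextension}, and elementary bookkeeping with the genus-dependent constants.
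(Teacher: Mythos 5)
Your proposal is correct and follows essentially the same route as the paper's proof: the same three ingredients (Proposition \ref{equality2forms} pulled back along $\mu$, Proposition \ref{biextension} giving $\nu^* w_{\wedge^3H/H} = (4g+2)\,\omega_{\mathrm{HR}}$, and the scaling $[2]^* w_H = 4\,w_H$ combined with $\kappa = 2 j_\alpha$) are substituted into the definition (\ref{eJ}) of $e^J$, the only difference being that the paper first solves for $\kappa^* w_H = -2g(g-1)\,e^J - 6\,\omega_{\mathrm{HR}}$ and then divides by $4$, whereas you substitute $\kappa^* w_H = 4\, j_\alpha^* w_H$ at the outset. Two marginal remarks: the paper makes explicit the Tate twist (replacing $H_1(X,\zz)$ by $H_1(X,\zz(-1))$) needed to view $\wedge^3H/H$ as a weight~$-1$ variation before invoking Proposition \ref{biextension}, which you gloss over, while your justification of $[2]^* w_H = 4\,w_H$ via the uniqueness clause of Proposition \ref{unique2form} is slightly more complete than the paper's one-line appeal to fibrewise translation-invariance.
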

\begin{proof}
Upon replacing $H_1(X,\zz)$ by $H_1(X,\zz(-1))$ one views
the variation of Hodge structures over $\aa_g$ determined by $\wedge^3 H/H$ to be one of weight~$-1$ (cf.
\cite{hrar}, Section~4). Proposition \ref{biextension} gives
that the first Chern form of $(\hat{\mathcal{B}},\|\cdot\|_{\hat{\mathcal{B}}})$
equals $2 \, w_{\wedge^3H/H}$ so that
\[  \nu^*w_{\wedge^3H/H} =(4g+2)\,\omega_{\mathrm{HR}} \, . \]
Proposition \ref{equality2forms} then yields
\[ (g-1)\mu^* w_{\wedge^3H}=\kappa^* w_H + (4g+2)\,\omega_{\mathrm{HR}} \, . \]
Combining this equality with the definition of $e^J$ we find
\[ \kappa^*w_H = -2g(g-1)e^J - 6 \, \omega_{\mathrm{HR}}  \]
(cf. \cite{hrgeom}, Theorem~1). On the other hand we have $[2]^* w_H = 4w_H$, which follows from the fact that $w_H$ restricts to a translation-invariant $(1,1)$-form in each fiber, and $\kappa = 2j_\alpha$ which together give
\[ \kappa^*w_H = 4 j_\alpha^* (w_H) \, . \]
The proposition follows.
\end{proof}

\section{Kawazumi's result}

In this section we state Kawazumi's result on the second variation of the $\varphi$-invariant on $\mm_g$. His result expresses the second variation of $\varphi$ in terms of the differential form $e^J$ on $\cc_g$, introduced above, and a second differential form $e^A$ on $\cc_g$ which we introduce next.

Let $X$ be a compact Riemann surface of genus $g \geq 2$. From \cite{ar} we
obtain that the line bundle $\mathcal{O}(\Delta)$ on $X \times X$, where $\Delta$
is the diagonal, comes equipped with a natural hermitian metric given by
$\|1\|(x,y) = G(x,y)$, where $G$ is the Arakelov Green's function. Fixing $x$ on $X$, the function $G(x,\cdot)$ is determined by the set of equations
\[  \frac{\partial \overline{\partial} }{\pi i} \log G(x,\cdot) = \mu_X - \delta_x \, , \qquad \int_X \log G(x,y) \, \mu_X(y) = 0 \, . \]
By demanding that the adjunction (residue) isomorphism
\[ \mathcal{O}(-\Delta)|_\Delta \rightarrow \omega_X \]
where $\omega_X$ is the holomorphic cotangent bundle to $X$
should be an isometry we obtain a canonical hermitian metric $\|\cdot\|_\mathrm{Ar}$
on $\omega_X$. Globalizing this construction we obtain a canonical hermitian metric
$\|\cdot\|_\mathrm{Ar}$ on $\omega_{\cc_g/\mm_g}$. Denote by $e^A$ the first
Chern form of the dual metric on $\omega^{-1}_{\cc_g/\mm_g}$. Kawazumi's theorem is then the following.
\begin{thm} (Kawazumi \cite{kaw}) \label{kawazumi}
On $\cc_g$, the differential equation
\[ e^A - e^J = \frac{1}{2g(2g+1)} \frac{\deldelbar }{\pi i} \, \varphi \]
is satisfied.
\end{thm}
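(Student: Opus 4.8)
The plan is to produce an explicit $\deldelbar$-potential for $e^A - e^J$ and then to identify it with $\varphi$. The starting observation is that both forms represent the same cohomology class: $e^A$ is by construction a first Chern form of $\omega^{-1}_{\cc_g/\mm_g}$ (for the dual Arakelov metric), while $e^J$ represents the class of $\omega^{-1}_{\cc_g/\mm_g}$ by Morita's theorem. Hence $e^A - e^J$ is $\deldelbar$-exact on $\cc_g$, say $e^A - e^J = \frac{\deldelbar}{\pi i} F$, and the entire problem becomes one of pinning down the potential $F$.

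First I would analyze $F$ along the fibers of $\pi$. Restricting to a fiber $X$, the defining property of the Arakelov metric gives $e^A|_X = (2-2g)\mu_X$. On the other hand $e^J|_X$ is a closed $(1,1)$-form of total mass $2-2g$ assembled from the translation-invariant forms $w_H$ and $w_{\wedge^3 H}$ pulled back along $\kappa$ and $\mu$; being canonical in the Hodge structure of $X$, it must be a multiple of the unique such form $\mu_X$, and comparing masses forces $e^J|_X = (2-2g)\mu_X$ as well. Thus $(e^A - e^J)|_X = 0$, so $F|_X$ is harmonic on the compact surface $X$ and therefore constant. It follows that $F = \pi^* f$ for a function $f$ on $\mm_g$, and consequently $e^A - e^J = \pi^*\big(\frac{\deldelbar}{\pi i} f\big)$ is pulled back from the base. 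This already reduces Kawazumi's theorem to the scalar identity $\frac{\deldelbar}{\pi i} f = \frac{1}{2g(2g+1)}\frac{\deldelbar}{\pi i}\varphi$ on $\mm_g$.

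To compute $f$, I would realize $F$ concretely as the logarithm of the ratio of two metrics on $\omega^{-1}_{\cc_g/\mm_g}$: the Arakelov metric, whose curvature is $e^A$, and the \emph{Morita metric} obtained by transporting the canonical biextension metrics of Proposition~\ref{biextension} (which have curvatures $2w_H$ and $2w_{\wedge^3 H}$, and hence assemble, via $\kappa$ and $\mu$, to $e^J$). Here the residue isometry $\mathcal{O}(-\Delta)|_\Delta \cong \omega_X$ expresses $\|\cdot\|_\mathrm{Ar}$ through the near-diagonal behavior of the Green's function $G$, so this log-ratio is controlled by $G$. I would then bring in the spectral formula for $\varphi$, rewritten as a Green's-operator energy $\varphi(X) = 2\sum_{m,n}\langle \mathcal{G}_\mu f_{mn}, f_{mn}\rangle$, where $f_{mn}$ is the function representing $\eta_m\wedge\overline\eta_n$ relative to $\mu_X$ and $\mathcal{G}_\mu$ is the inverse of $\Delta_\mathrm{Ar}$ on the orthocomplement of the constants. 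The claim to establish is that this energy is exactly $2g(2g+1)$ times the Arakelov-versus-Hodge defect measured by $F$.

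The main obstacle is this last identification, which is at bottom a second-variation computation. One must differentiate $\varphi$ twice along $\mm_g$, controlling simultaneously the variation of the Hodge forms $\eta_m$ (governed by the Gauss-Manin connection and the Kodaira-Spencer data, which is precisely what the pointed harmonic volume $\mu$ and the term $\mu^* w_{\wedge^3 H}$ in $e^J$ encode) and the variation of the Green's operator $\mathcal{G}_\mu$ (driven by the variation of $\mu_X$, tracked by the term $\kappa^* w_H$ through the universal abelian variety $\jj(H)$). Organizing these contributions is where Proposition~\ref{equality2forms}, the identity $(g-1)w_{\wedge^3 H} = c^* w_H + p^* w_{\wedge^3H/H}$, becomes essential: it separates the purely abelian part of the variation from the genuinely curve-theoretic Ceresa part. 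Matching the resulting expression term by term against $2g(2g+1)(e^A - e^J)$, using the defining equations $\frac{\deldelbar}{\pi i}\log G(x,\cdot) = \mu_X - \delta_x$ to handle the $e^A$ side, is the delicate bookkeeping that I expect to consume most of the work and to complete the proof.
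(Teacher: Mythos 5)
There is a genuine gap, and it sits exactly where you place it yourself: the last paragraph of your proposal announces, but does not carry out, the identification of the potential $F$ with $\frac{1}{2g(2g+1)}\varphi$. That identification is not ``delicate bookkeeping'' appended to a proof --- it \emph{is} the theorem. Everything you do before it (the cohomological comparison, the fiberwise computation, the reduction to a function pulled back from $\mm_g$) only shows that $e^A - e^J = \pi^*\bigl(\frac{\deldelbar}{\pi i} f\bigr)$ for \emph{some} function $f$ on $\mm_g$; determining that this $f$ is (up to a pluriharmonic, hence constant, ambiguity) the spectral invariant $\varphi/(2g(2g+1))$ requires actually differentiating the Green's function / harmonic volume data twice along moduli, which is the content of Kawazumi's computation in \cite{kaw} and which your outline defers entirely. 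Note for comparison that the paper does not prove Theorem \ref{kawazumi} from scratch either: it imports the differential equation from \cite{kaw} and its ``proof'' consists purely of a dictionary --- checking that Kawazumi's $a_g$ equals $\frac{1}{2\pi}\varphi$, that his $e^A$ is the Arakelov--Chern form, and that his $e^J = -\frac{1}{2g(2g+1)}(M_1+M_2)(\eta_1^{\otimes 2})$ coincides with the form (\ref{eJ}), via $M_1 = 2c^*w_H$, $M_2 = 3w_{\wedge^3 H}$ (Remark 20 of \cite{hrgeom}) and the fact that $\eta_1'$ is the first variation of the pointed harmonic volume $\mu$. So you are attempting something strictly harder than what the text does, and the hard part is missing.

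Two further points, both repairable but currently gaps. First, your opening claim that cohomologous closed $(1,1)$-forms on $\cc_g$ differ by $\frac{\deldelbar}{\pi i}$ of a global function is not automatic: $\cc_g$ is not compact, so no $\deldelbar$-lemma applies off the shelf. Your ``Morita metric'' idea is the right fix, but it requires a Morita isomorphism at the level of holomorphic line bundles, e.g. $\kappa^*\hat{\mathcal{B}}_H^{\otimes 2}\otimes\mu^*\hat{\mathcal{B}}_{\wedge^3 H}^{\otimes 3} \cong \omega_{\cc_g/\mm_g}^{\otimes 4g(2g+1)}$, not merely an equality of rational Chern classes; promoting one to the other is a statement about $\mathrm{Pic}(\cc_g)$ that must be invoked (it is available in the circle of ideas of \cite{hrgeom}, where Proposition \ref{biextension} supplies the metrics). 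Second, your justification that $e^J|_X$ ``being canonical, must be a multiple of $\mu_X$'' is not an argument: the hyperbolic volume form is equally canonical and is not proportional to $\mu_X$. The correct route is a direct computation: $\kappa^*w_H|_X = (2g-2)^2 g\,\mu_X$ since $\kappa$ restricted to a fiber is Abel--Jacobi composed with multiplication by $2g-2$ up to translation, and then Proposition \ref{equality2forms}, together with the fact that $\nu$ factors through $\mm_g$ so that $\nu^*w_{\wedge^3H/H}$ dies on fibers, gives $\mu^*w_{\wedge^3 H}|_X = 4g(g-1)\mu_X$ and hence $e^J|_X = (2-2g)\mu_X$ as you claim.
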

Actually the main result of \cite{kaw} reads
\[ e^A - e^J = \frac{-2i}{2g(2g+1)} \deldelbar \, a_g \, . \]
One verifies directly that the function $a_g$ as defined in the Introduction
of \cite{kaw} is equal to $\frac{1}{2\pi} \varphi$, and that the $2$-form $e^A$ on $\cc_g$ as defined in \cite{kaw}
is the one defined above. We would like to explain that the $2$-form $e^J$ defined in (\ref{eJ}) is equal to the $2$-form called $e^J$ in \cite{kaw}. The latter is written (cf. Definition (3.10) in \cite{kaw}) as
\[ e^J = -\frac{1}{2g(2g+1)} (M_1 + M_2)(\eta_1^{\otimes 2}) \, , \]
where the following notation is used.
Let $\hh_\zz$ be the local system over $\aa_g$ associated to $H$ and consider
the derived local systems $\hh_\rr=\hh_\zz
\otimes \rr$ and $\hh_\CC = \hh_\zz \otimes \CC$ over $\aa_g$ and $\mm_g$. We use the same notation to denote
their pullbacks on $\cc_g$. Note that when pulled back along $\cc_g$, the
intermediate jacobian fibration $\jj(\wedge^3 H)$ can be seen as a torus bundle
over $\cc_g$ with fiber $\wedge^3H \otimes (\rr/\zz)$.
Both $M_1,M_2$ are real forms in  $\mathrm{Hom}(\wedge^2(\wedge^3 \hh_\CC),
\CC)$,
hence global $2$-forms on $\jj(\wedge^3 H)$, coinciding with the forms $C_1,C_2$ from \cite{molinear}. By the
discussion in Remark~20 of \cite{hrgeom} we can therefore write $M_1=2c^* w_H$ and $M_2=3w_{\wedge^3 H}$ on
$\jj(\wedge^3 H)$ where $c \colon
\wedge^3 H \to H$ is the contraction map (\ref{contraction}). The section
$\eta_1^{\otimes 2}$ of the local system
$\wedge^2(\wedge^3 \hh_\CC)$ over $\cc_g$
is the one induced by the section $\eta_1 = \eta'_1 + \overline{\eta}'_1$ of the
local system $\wedge^3 \hh_\CC$ where,
as is explained in the introduction to \cite{kaw}, the section $\eta'_1$ of
$\wedge^3 \hh_\rr$ is the first
variation of the pointed harmonic volume
$\mu \colon \cc_g \to \jj(\wedge^3 H)$.
We obtain $M_1(\eta_1^{\otimes 2})=2 \mu^* c^* w_H=2 \kappa^* w_H$ and
$M_2(\eta_1^{\otimes 2})= 3\mu^* w_{\wedge^3 H}$ and the equality of Kawazumi's $e^J$ with the one in (\ref{eJ}) follows.

\section{Proof of the main theorem}

In this section we prove Theorems \ref{realmain} and \ref{main}. Let $\delta_F$ be the Faltings delta-invariant on $\mm_g$ (see \cite{fa}, p.~402 for its definition). A convenient expression for its second variation is given by the following proposition.
\begin{prop} \label{secondequality}
Over $\cc_g[2]$, we have an equality
\[ j_\alpha^* w_H = -\frac{g(g-1)}{2}e^A - \frac{3}{2}\omega_{\mathrm{Hdg}} - \frac{1}{8}
\frac{\deldelbar}{\pi i} \, \delta_F \]
of $2$-forms.
\end{prop}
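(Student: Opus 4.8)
The plan is to realize the left-hand side $j_\alpha^* w_H$ as the first Chern form of a canonically metrized theta line bundle pulled back along $j_\alpha$, and then to re-express that metric through Arakelov theory; the Faltings delta-invariant will enter exactly as the discrepancy between the canonical theta metric and the Arakelov metric on $\omega_{\cc_g/\mm_g}$. Rearranged, the identity is then the sought expression for the second variation $\frac{\deldelbar}{\pi i}\delta_F$ of Faltings's invariant. First I would recall, from the characterization in Proposition~\ref{unique2form}, that on $\jj(H)$ the form $w_H$ agrees with the first Chern form of the canonically (Riemann-)metrized line bundle $(\mathcal{O}(\Theta),\|\cdot\|_\Theta)$ up to the pullback from $\aa_g$ of the restriction of that Chern form to the zero-section. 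Since the canonical metric carries the normalizing factor $(\det \mathrm{Im}\,\tau)^{1/4}$, this zero-section restriction is a fixed rational multiple of the first Chern form of the Hodge bundle $\mathcal{L}$ with its Hodge metric. Pulling back along $j_\alpha$ thus reduces the problem to computing the Chern form of $j_\alpha^*(\mathcal{O}(\Theta),\|\cdot\|_\Theta)$ on $\cc_g[2]$, modulo an explicit multiple of $\omega_{\mathrm{Hdg}}$.

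The coefficient of $e^A$ is pinned down by the behaviour in the fibre direction, which is purely topological: $w_H$ restricts to the principal polarization form on each jacobian, and $j_\alpha=\mathrm{transl}\circ[g-1]\circ u$ (with $u$ an Abel-Jacobi map) gives $\int_X j_\alpha^* w_H=(g-1)^2\deg u^*\mathcal{O}(\Theta)=g(g-1)^2=\tfrac{g(g-1)}{2}\deg\omega_X=-\tfrac{g(g-1)}{2}\int_X e^A$. The genuine content lies in the base and mixed directions, and here I would invoke the Arakelov-theoretic comparison underlying the definition of $\delta_F$: along $j_\alpha$ the logarithm of the canonical theta metric equals, up to an additive constant, the sum of $\tfrac{g(g-1)}{2}$ times the logarithm of the Arakelov metric on $\omega_{\cc_g/\mm_g}$, a fixed multiple of the logarithm of the Hodge metric on $\mathcal{L}$, and a fixed multiple of $\delta_F$. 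Applying $\frac{\deldelbar}{\pi i}$ then converts the Arakelov term into $-\tfrac{g(g-1)}{2}e^A$, collects all Hodge contributions (including the zero-section correction of the previous step) into $-\tfrac32\,\omega_{\mathrm{Hdg}}$, and turns the delta-term into $-\tfrac18\frac{\deldelbar}{\pi i}\delta_F$, which is the assertion.

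As a consistency check, feeding this proposition together with Proposition~\ref{firstequality} and Kawazumi's Theorem~\ref{kawazumi} into $\lambda=\tfrac{g-1}{6(2g+1)}\varphi+\tfrac1{12}\delta$ reproduces Theorem~\ref{main} exactly, since the $e^A-e^J$ and $\omega_{\mathrm{HR}}-\omega_{\mathrm{Hdg}}$ terms combine precisely so that the $j_\alpha^* w_H$ contributions cancel.

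The main obstacle is this comparison, in particular the exact rational coefficient $-\tfrac18$ of $\delta_F$. The difficulty is structural: the image of $j_\alpha$ lies entirely inside the theta divisor, since $(g-1)x-\alpha$ is represented by the effective divisor $(g-1)x$, so the pulled-back theta section vanishes identically and one cannot read off its norm directly. One must instead work with the section of $\omega_{\cc_g/\mm_g}$ obtained from the gradient of $\theta$ transverse to $\Theta$: by Riemann's singularity theorem this gradient recovers the canonical differential at $x$, and its Arakelov norm is governed precisely by $\delta_F$. Carrying out this Gauss map / normal-derivative computation, and in particular separating the diagonal contribution (which produces the power $\tfrac{g(g-1)}{2}$ of the Arakelov $\omega$) from the base contribution (which produces $\omega_{\mathrm{Hdg}}$ and the delta-term), is the technical heart of the proof.
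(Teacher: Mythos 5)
Your overall architecture coincides with the paper's proof: both realize $j_\alpha^* w_H$ through the universal theta bundle $\mathcal{O}(\Theta_\alpha)$ on $\jj(H)[2]$ with its canonical metric, record that its first Chern form $w_0$ differs from $w_H$ by a multiple of $\omega_{\mathrm{Hdg}}$ pulled back from the base (the paper pins this down as $w_0 = w_H + \frac{1}{2}\omega_{\mathrm{Hdg}}$, which is where part of the $-\frac{3}{2}$ comes from), and then convert the pullback along $j_\alpha$ into Arakelov data, with $\delta_F$ entering as the discrepancy between the theta metric and the Arakelov/Hodge metrics. Your observation that the image of $j_\alpha$ lies inside $\Theta_\alpha$ (so that $j_\alpha^*\theta$ vanishes identically and one must pass to a transverse derivative) is correct, and it is exactly why the comparison object in the paper's proof is a Wronskian differential.

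However, there is a genuine gap precisely where you locate it. The content of the proposition is the pair of exact constants $-\frac{3}{2}$ and $-\frac{1}{8}$, and your argument derives neither: the zero-section contribution is left as ``a fixed rational multiple'' and the $\delta_F$-term as ``a fixed multiple'' whose determination is deferred to an unexecuted Gauss-map/normal-derivative computation. The paper closes this step by citation rather than computation: there is a canonical isomorphism $j_\alpha^*\mathcal{O}(\Theta_\alpha) \to \omega^{\otimes g(g-1)/2}\otimes\mathcal{L}^{-1}$ over $\cc_g[2]$ (\cite{hrgeom}, Section~3), and this isomorphism has norm exactly $\exp(\delta_F/8)$ when $\omega$ carries the Arakelov metric and $\mathcal{L}$ the Hodge metric (\cite{dj}, Lemma~3.2); taking first Chern forms of both sides and inserting $w_0 = w_H + \frac{1}{2}\omega_{\mathrm{Hdg}}$ then yields the stated identity. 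Note also that your consistency check against Theorem~\ref{main} cannot be used to fix the missing coefficients: Theorem~\ref{main} is deduced from this proposition together with Proposition~\ref{firstequality} and Theorem~\ref{kawazumi}, so arguing backwards from it would be circular. Of the three constants in the formula, the only one your proposal actually establishes is the coefficient $-\frac{g(g-1)}{2}$ of $e^A$, via the (correct) fibre-degree computation.
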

\begin{proof} We refer to \cite{fa}, p.~413 for the first half of this proof.
On $\mathcal{J}(H)[2]$ we have a universal theta divisor
$\Theta_\alpha$. When restricted to the jacobian $J$ of a curve $X$, the divisor
$\Theta_\alpha$ is equal to the image of the canonical theta divisor on
$\mathrm{Pic}^{g-1} X$ under the isomorphism $\mathrm{Pic}^{g-1} X \to J$ defined
by $x \mapsto x-\alpha$. Further, the orbifold $\mathcal{J}(H)[2]$ can be
written as a
quotient of the analytic variety $\CC^g \times \HH_g$ where $\HH_g$ is the
Siegel upper half space of complex symmetric $g$-by-$g$ matrices with positive definite
imaginary part. When pulled back to $\CC^g \times \HH_g$, for a suitable choice
of universal theta characteristic
the divisor $\Theta_\alpha$ can be given analytically by Riemann's
standard theta function $\theta$. As a result,
the line bundle $\mathcal{O}(\Theta_\alpha)$ on $\jj(H)[2]$
comes equipped with a natural hermitian metric;
the norm of $\theta$ in this metric is given by
\[ \|\theta\| =(\det \mathrm{Im} \, \tau)^{1/4}\exp(-\pi \, {}^t
y \, (\mathrm{Im} \, \tau)^{-1} \, y)|\theta(z,\tau)| \]
where $z=x+iy $ is in $\CC^g$ and $\tau$ is in $\HH_g$. With this metric,
the first Chern form $w_0$ of $\mathcal{O}(\Theta_\alpha)$ equals
\begin{equation} \label{w_0_H} w_0 = w_H + \frac{1}{2}
\omega_{\mathrm{Hdg}}
\end{equation}
(cf. \cite{hrgeom}, Proposition 2).
Now as is explained in \cite{hrgeom}, Section 3
there exists a canonical isomorphism
\[ j_\alpha^* \mathcal{O}(\Theta_\alpha) \longrightarrow
\omega^{\otimes g(g-1)/2} \otimes \mathcal{L}^{-1}   \]
of line bundles over $\cc_g[2]$,
given by sending $j_\alpha^* \theta$ to  a suitable Wronskian
differential. By Lemma 3.2 of \cite{dj}
the norm of this isomorphism is equal to $\exp(\delta_F/8)$,
when $\mathcal{L}$ is equipped with the Hodge
metric given by (\ref{defhodgemetric}), and $\omega$ is equipped with the
Arakelov metric $\| \cdot \|_{\mathrm{Ar}}$.
By taking first Chern forms we find
\[ j_\alpha^* w_0 = -\frac{g(g-1)}{2} e^A - \omega_{\mathrm{Hdg}} -
\frac{1}{8} \frac{\deldelbar}{\pi i} \, \delta_F \, . \]
We obtain the proposition by inserting (\ref{w_0_H}).
\end{proof}
From Propositions \ref{firstequality} and \ref{secondequality} we infer that
\begin{equation} \label{eAeJ}
 \frac{g(g-1)}{2} \left( e^A - e^J \right) = -\frac{1}{8} \frac{\deldelbar}{\pi
 i} \, \delta_F
+ \frac{3}{2} \omega_{\mathrm{HR}} - \frac{3}{2} \omega_{\mathrm{Hdg}} \, .
\end{equation}
By combining this equation with Kawazumi's result Theorem \ref{kawazumi} and
equation (\ref{lambdaphidelta}) we obtain Theorem \ref{main}.
\begin{proof}[Proof of Theorem \ref{realmain}] Theorem \ref{realmain} follows from Theorem \ref{main} once one knows that the only pluriharmonic functions on $\mm_g$ are constants. But this follows from the fact that $\mm_g$ allows a surjection from Teichm\"uller space in genus $g$, which is contractible, and the fact that the only invertible holomorphic functions on $\mm_g$ are constants (cf. \cite{hrar}, Lemma 2.1).
\end{proof}
\begin{remark} A shorter and perhaps more natural proof of Theorem \ref{realmain} (and hence of its corollaries) would be possible once one knows how to carry through some of the arguments in \cite{zh} in terms of line bundles on a suitable level cover $\mm'_g$ of $\mm_g$. For example, one would like to interpret Zhang's result (\ref{omegasquared}) as stating, among other things, that there exists a line bundle $\langle \Delta_\xi, \Delta_\xi \rangle$ on $\mm'_g$, together with a canonical isomorphism $\langle \Delta_\xi, \Delta_\xi \rangle^{\otimes 2g-2} \to \langle \omega, \omega \rangle^{\otimes 2g+1}$ of norm $\exp(-(2g-2)\varphi)$ over $\mm'_g$, where $\langle \omega,\omega \rangle$ is Deligne's pairing of the relative dualizing sheaf $\omega$ with itself. We will return to these matters in a future paper.
\end{remark}

\section{Hyperelliptic curves} \label{hyperelliptic}

In this section we prove Theorem \ref{hyp}. Let $\hh_g$ be the orbifold
moduli space of complex hyperelliptic
curves of genus $g \geq 2$. We start by reviewing the construction of the discriminant modular form $\Delta_g$ on $\hh_g$.  It generalizes the usual discriminant modular form of weight~$12$ in the theory of moduli of elliptic curves. 

Let $n={2g \choose g+1}$ and $r={2g+1 \choose g+1}$. Let $\HH_g$ again be the Siegel upper half-space of symmetric complex $g \times g$-matrices with
positive definite imaginary part. For $z$ in  $\CC^g$ (viewed as a
column vector), a matrix $\tau$ in $\HH_g$ and $\eta,\eta'$ in 
$\frac{1}{2} \zz^g$ we define the (classical) theta function with characteristic
$\eta=[{\eta' \atop \eta''}]$ to be 
\[ \theta[\eta](z,\tau) = \sum_{n \in \zz^g} \exp( \pi i  {}^t (n+\eta')
\tau  (n+\eta') + 2\pi i  {}^t (n+\eta') (z+\eta'')) \, . \] For
any subset $S$ of $\{ 1,2,\ldots,2g+1\}$ we define a theta
characteristic $\eta_S$ as follows: let
\[ \begin{array}{rcl} \eta_{2k-1} & = &
\left[ { {}^t( 0 \, , \, \ldots \, , \, 0 \, , \, \frac{1}{2} \, ,
\,  0 \, , \, \ldots \, , \, 0 ) \atop
{}^t (\frac{1}{2} \, , \, \ldots \, , \,  \frac{1}{2} \, , \,  0 \,
, \, 0 \, , \,
\ldots \, , \, 0 )}
\right] \, , \quad 1 \leq k \leq g+1 \, ,  \\ \eta_{2k} & = &
\left[ { {}^t( 0 \, , \, \ldots \, , \, 0 \, , \, \frac{1}{2} \, ,
\,  0 \, , \, 
\ldots \, , \, 0 ) \atop
{}^t (\frac{1}{2} \, , \, \ldots \, , \, \frac{1}{2} \, , \, \frac{1}{2} \, ,
\, 0 \, , \, \ldots \, , \, 0
)} \right] \, , \quad 1 \leq k \leq g \, ,
\end{array} \] where each time the non-zero entry in the top row occurs in
the $k$-th position. Then we put $\eta_S = \sum_{k \in S} \eta_k$
where the sum is taken modulo 1.
Let $\tt$ be the set of subsets of $\{1,2,
\ldots, 2g+~1 \}$ of cardinality $g+1$. Write
$U=\{1,3,\ldots,2g+1\}$ and let $\circ$ denote the symmetric
difference. The discriminant modular form $\Delta_g$ is then defined to be the function
\[ \Delta_g(\tau) = 2^{-(4g+4)n} \prod_{T \in \tt}
\theta[\eta_{T \circ U}](0,\tau)^8   \] on $\HH_g$. It follows from results in
\cite{lock}, Section~3 that the
function $\Delta_g$ is a modular form on the congruence subgroup $\Gamma_g(2) = \{ \gamma \in \mathrm{Sp}(2g,\zz) | \gamma \equiv I_{2g} \bmod 2 \}$ of
weight $4r$.

Now let $\tau$ in $\HH_g$ be the period matrix of a complex hyperelliptic curve $X$ of genus $g$ marked with a \emph{canonical} basis of homology determined by an ordering of the set of Weierstrass points on $X$ (see \cite{mu}, Chapter IIIa, \S 5). We put
$\|\Delta_g\|(\tau) = (\det \mathrm{Im} \, \tau)^{2r}|\Delta_g(\tau)|$. Then
for a given hyperelliptic curve $[X] \in \hh_g$ the value of $\|\Delta_g\|(\tau)$ on a period matrix on a canonical basis associated to $X$ does not depend on the choice of such a matrix.  We find that $\|\Delta_g\|$ is a well-defined real-valued function on $\hh_g$.

We remark that $\hh_g$ extends as a moduli stack of hyperelliptic curves over
$\mathbb{Z}$. Further, there exists an up to sign unique global trivializing section
$\Lambda$ of the line bundle $\mathcal{L}^{\otimes 8g+4}$ over $\hh_g$ that extends as a trivializing section
of $\mathcal{L}^{\otimes 8g+4}$ over $\mathbb{Z}$ (cf. \cite{djexplicit}, Proposition~3.1). It is possible to give an explicit formula for $\|\Lambda\|_{\mathrm{Hdg}}$ over $\hh_g$ in terms of $\|\Delta_g\|$.
\begin{prop} \label{formulanormlambda} Let $\Lambda$ be the (up to sign unique) global trivializing section of $\mathcal{L}^{\otimes 8g+4}$ over $\zz$. Then the formula 
\[ \|\Lambda\|_{\mathrm{Hdg}}^n =
(2\pi)^{4g^2r}\|\Delta_g\|^g  \]
holds.
\end{prop}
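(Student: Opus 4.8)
The plan is to reduce the proposition to a comparison of two nowhere-vanishing holomorphic sections of one and the same power of $\mathcal{L}$, and then to identify the comparison constant by an explicit computation with a hyperelliptic model.

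First I would make the Hodge metric completely explicit. Fix a canonical symplectic basis of $H_1(X,\zz)$ and let $\omega_1,\ldots,\omega_g$ be the associated normalized basis of $\mathrm{H}^0(X,\omega_X)$, so that the period matrix is $\tau\in\HH_g$. The Riemann bilinear relations give
\[ \frac{i}{2}\int_X \omega_j\wedge\overline{\omega}_k=(\mathrm{Im}\,\tau)_{jk}, \qquad \|\omega_1\wedge\cdots\wedge\omega_g\|_{\mathrm{Hdg}}=(\det\mathrm{Im}\,\tau)^{1/2}. \]
Hence for a section of $\mathcal{L}^{\otimes k}$ written as $f(\tau)\,(\omega_1\wedge\cdots\wedge\omega_g)^{\otimes k}$ the Hodge norm equals $|f|(\det\mathrm{Im}\,\tau)^{k/2}$. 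In particular, viewing $\Delta_g$ as a section of $\mathcal{L}^{\otimes 4r}$, its Hodge norm is exactly the Petersson norm $\|\Delta_g\|=(\det\mathrm{Im}\,\tau)^{2r}|\Delta_g|$, and $\|\Delta_g^{\,g}\|_{\mathrm{Hdg}}=\|\Delta_g\|^g$, $\|\Lambda^n\|_{\mathrm{Hdg}}=\|\Lambda\|_{\mathrm{Hdg}}^n$.

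Next I would use the numerical identity $rg=(2g+1)n$, which follows at once from $\binom{2g+1}{g+1}=\frac{2g+1}{g}\binom{2g}{g+1}$ and gives $(8g+4)n=4rg$. Thus $\Lambda^n$ and $\Delta_g^{\,g}$ are both sections of $\mathcal{L}^{\otimes 4rg}$. Both are nowhere vanishing on $\hh_g$: the section $\Lambda$ trivializes $\mathcal{L}^{\otimes 8g+4}$ by construction, while $\Delta_g$ is, up to the normalizing power of $2$, the discriminant of the defining polynomial and so is nonzero on the smooth locus. Therefore $\Lambda^n=c\,\Delta_g^{\,g}$ for a holomorphic nowhere-vanishing function $c$ on (a suitable level cover of) $\hh_g$, which one checks is constant. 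Because the $(\det\mathrm{Im}\,\tau)$-exponents match automatically (one has $(4g+2)n=2rg$), the proposition is now equivalent to the single statement $|c|=(2\pi)^{4g^2r}$.

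To pin down $|c|$ I would pass to an explicit affine model $y^2=f(x)=\prod_k(x-a_k)$ and work with the algebraic differentials $\nu_i=x^{i-1}\,dx/y$. Writing $A$ for the matrix of $a$-periods of the $\nu_i$, one has $\nu_1\wedge\cdots\wedge\nu_g=\det(A)\,\omega_1\wedge\cdots\wedge\omega_g$. On the one hand, the integral normalization of $\Lambda$ from \cite{djexplicit} expresses $\Lambda$ through $(\nu_1\wedge\cdots\wedge\nu_g)^{\otimes 8g+4}$ and a power of the discriminant of $f$. On the other hand, Thomae's formula in the form of \cite{lock} writes each $\theta[\eta_{T\circ U}](0,\tau)^8$ as an explicit power of $\det A$ and of $2\pi i$ times products of differences $a_i-a_j$; the product over $T\in\tt$ then assembles these differences into a power of the discriminant. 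Substituting both descriptions into $\Lambda^n=c\,\Delta_g^{\,g}$ as sections of $\mathcal{L}^{\otimes 4rg}$, the dependence on $\det A$ cancels (via $(8g+4)n=4rg$) and the dependence on the discriminant cancels (via the combinatorial count of how often each $a_i-a_j$ occurs as $T$ ranges over $\tt$), leaving only the residual power of $2\pi i$, which yields $|c|=(2\pi)^{4g^2r}$. Taking Hodge norms and inserting the first paragraph's formula then gives the proposition.

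The main obstacle is this last step: keeping exact track, in Thomae's formula, of the power of $2\pi i$ attached to each theta constant and of the multiplicity with which each branch-point difference appears across all $T\in\tt$, together with the precise integral normalization defining $\Lambda$. These arithmetic and combinatorial constants must conspire so that the $\det A$- and discriminant-dependence cancels exactly and the surviving power of $2\pi$ is precisely $4g^2r$; by contrast the analytic ingredients (the Riemann relations and the identification of the Petersson norm with a Hodge norm) are routine.
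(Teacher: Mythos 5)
Your proposal is correct and is essentially the paper's own route: the paper's ``proof'' is just a pointer to the proof of Theorem~8.2 in \cite{djexplicit}, which runs exactly as you describe --- the discriminant normalization of $\Lambda$ on an explicit hyperelliptic model, Thomae's formula for the theta constants, and bookkeeping of $\det A$, the branch-point differences and the powers of $2\pi$, using the identity $(8g+4)n=4rg$. One warning: the intermediate claim ``$\Lambda^n=c\,\Delta_g^{\,g}$ with $c$ constant, which one checks'' cannot be checked softly, since the relevant level cover of $\hh_g$ is close to a configuration space of $2g+2$ points on $\mathbb{P}^1$ and carries many non-constant invertible holomorphic functions (unlike $\mm_g$); this step is in fact redundant in your argument, because the Thomae computation is performed on an arbitrary model and therefore proves the identity, constant included, pointwise.
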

\begin{proof} For this we refer to the proof of Theorem~8.2 in \cite{djexplicit}.
\end{proof}
For the biextension metric on $\hh_g$ we have the following result (cf. \cite{hrar}, Proposition 6.7).
\begin{prop} The metric $\|\cdot\|_\mathrm{biext}$ restricted to the trivial line bundle $\mathcal{L}^{\otimes 8g+4}$ over $\hh_g$ is a constant metric.
\end{prop}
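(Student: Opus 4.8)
The plan is to exploit the vanishing of the Ceresa cycle for hyperelliptic curves in order to reduce the claim to the triviality of the biextension metric along the zero-section of $\jj(\wedge^3H/H)$. First I would show that the normal function $\nu$ vanishes identically on $\hh_g$. Fix a hyperelliptic curve $X$ together with a Weierstrass point $w$, and embed $X$ into its jacobian $J$ by $x \mapsto [x-w]$. Since for every $x$ the divisor $x+\iota(x)$ lies in the hyperelliptic pencil, and $2w$ represents the same class (as $w$ is fixed by the hyperelliptic involution $\iota$), one checks that this embedding intertwines $\iota$ with $[-1]$ on $J$. As $\iota$ is an automorphism of $X$, it follows that $X^-_w = [-1]_* X_w = \iota_* X_w = X_w$ as cycles, so the Ceresa cycle $X_w-X^-_w$ is zero. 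Hence the pointed harmonic volume $\mu(X,w)$ vanishes and $\nu(X) = p(\mu(X,w)) = 0$. As $[X]$ was arbitrary, $\nu$ maps $\hh_g$ into the zero-section.

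By definition $\|\cdot\|_{\mathrm{biext}}$ is obtained by pulling back $\|\cdot\|_{\hat{\mathcal{B}}}$ along $\nu$ and transporting to $\mathcal{L}^{\otimes 8g+4}$ by a Morita isomorphism. Since $\nu|_{\hh_g}$ factors through the zero-section, it suffices to prove that the restriction of $(\hat{\mathcal{B}},\|\cdot\|_{\hat{\mathcal{B}}})$ to the zero-section is isometrically trivial. Two ingredients enter. On the one hand, Proposition~\ref{biextension} identifies the first Chern form of $\hat{\mathcal{B}}$ with $2\,w_{\wedge^3H/H}$, and Proposition~\ref{unique2form} says $w_{\wedge^3H/H}$ restricts to zero along the zero-section; thus $\omega_{\mathrm{HR}}$ vanishes on $\hh_g$ and $\log\|\cdot\|_{\mathrm{biext}}$ is pluriharmonic there. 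On the other hand, a biextension carries a canonical rigidification along its identity section, coming from the splitting of the relevant mixed Hodge structures, and I would verify from the construction in \cite{hrar}, Section~3, that the Hain--Reed metric restricts to the trivial metric on this canonical trivialization.

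The canonical rigidifying section then transports, through the Morita isomorphism and $\nu|_{\hh_g}=0$, to a global trivializing section of $\mathcal{L}^{\otimes 8g+4}$ over $\hh_g$ of constant biextension norm. Comparing it with the distinguished section $\Lambda$ of Proposition~\ref{formulanormlambda} — both being natural trivializations extending over $\zz$ — one expects the two to differ only by a constant, so that $\|\Lambda\|_{\mathrm{biext}}$ is constant as asserted. I expect the \textbf{main obstacle} to be the second ingredient above: passing from the vanishing of the curvature form to the genuine triviality of the metric along the zero-section. Flatness alone does not force constancy, so one must really use the canonical splitting of the biextension over split mixed Hodge structures together with the explicit shape of the Hain--Reed metric; this is precisely the content of Proposition~6.7 of \cite{hrar}, on which the argument ultimately rests.
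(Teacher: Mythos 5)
Your proposal is correct and follows essentially the same route as the paper: the Ceresa cycle of a hyperelliptic curve vanishes because the Weierstrass-point embedding intertwines the hyperelliptic involution with $[-1]$, so $\nu$ factors through the zero section, and along the zero section $\hat{\mathcal{B}}$ is canonically trivial as a \emph{metrized} bundle (fiber $\mathbb{C}$ with the standard euclidean metric), which is exactly the rigidification you invoke via \cite{hrar}, Proposition~6.7. The detour through curvature vanishing and pluriharmonicity, and the closing comparison with the section $\Lambda$, are extra material beyond what the proposition requires; the two ingredients above already constitute the paper's proof.
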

\begin{proof} The Ceresa cycle is zero for any hyperelliptic curve $X$. Indeed, when
$X$ is embedded into its jacobian using a Weierstrass point, the involution $[-1]$ on the jacobian restricts to the hyperelliptic involution on $X$. Further, at zero the biextension line bundle $\hat{\mathcal{B}}$ restricts canonically to $\CC$ with its standard euclidean metric. 
\end{proof}
It follows that $\beta=-\log\|\Lambda\|_{\mathrm{Hdg}}$, up to a constant. From Theorem \ref{realmain} we then deduce
\[ (8g+4)\lambda=-\log\|\Lambda\|_{\mathrm{Hdg}} \, , \] 
up to a constant. Upon applying Proposition \ref{formulanormlambda} one then obtains 
\[ (8g+4)n \, \lambda = -4g^2r\log(2\pi) - g \log \|\Delta_g \|= 
-(8g+4)ng\log(2\pi) - g \log \|\Delta_g \| \, , \]
up to a constant, and Theorem \ref{hyp} is proven.

Using a recent result of K. Yamaki \cite{ya} it is possible to actually compute the constant implied by Theorem \ref{hyp}. Let $X$ be a hyperelliptic curve of genus $g \geq 2$ with semi-stable reduction over a non-archimedean
local field $k$. Let $\varepsilon$ be Zhang's epsilon-invariant of $X$ (cf. \cite{zh}, Section~1.2). Define the invariant $\psi$ as
\[ \psi = \varepsilon + \frac{2g-2}{2g+1}\varphi \, . \]
Let $\mathcal{X}$ be the special fiber of a regular semistable model of $X$ over the
ring of integers of $k$. We say that a double point $x$ of $\mathcal{X}$ is of
type $0$ if the local normalization of $\mathcal{X}$ at $x$ is connected. We
say that $x$ is
of type $i$, where $i=1,\ldots,[g/2]$, if the local normalization of
$\mathcal{X}$ at $x$ is the disjoint union of a curve of  genus
$i$ and a curve of genus $g-i$. Let $\iota$ be the
involution on $\mathcal{X}$ induced by the hyperelliptic involution on $X$.
Let $x$ be a double point of type $0$ on $\mathcal{X}$. If $x$ is fixed by
$\iota$, we say that $x$ is of subtype $0$. If $x$ is not fixed by $\iota$, the
local normalization of $\mathcal{X}$ at $\{x,\iota(x)\}$ consists of two
connected components, of genus $j$ and $g-j-1$, say, where $1\leq j \leq
[(g-1)/2]$. In this case we say that
the pair $\{x,\iota(x)\}$ is of subtype $j$. Let $\xi_0$ be the number of double
points of subtype $0$, let $\xi_j$ for $j=1,\ldots,[(g-1)/2]$ be the number of
pairs of double points of subtype $j$, and let $\delta_i$ for $i=1,\ldots,[g/2]$
be the number of double points of type $i$.
Equality (1.2.5) and Theorem 3.5 of \cite{ya} imply that
\[ \psi = \frac{g-1}{2g+1}\xi_0 + \sum_{j=1}^{[(g-1)/2]}
\frac{6j(g-j-1)+2g-2}{2g+1} \xi_j +
\sum_{i=1}^{[g/2]} \left( \frac{12i(g-i)}{2g+1} -1 \right) \delta_i  \, . \]
By \cite{zh}, Section~1.4 the non-archimedean $\lambda$-invariant is given by 
\[  \lambda = \frac{g-1}{6(2g+1)} \varphi + \frac{1}{12}(\varepsilon + \delta) =
\frac{1}{12}(\psi + \delta) \, .  \]
Here $\delta$ denotes the total number of singular points in the fiber at $v$. We obtain 
\[ (8g+4)\lambda = g\xi_0 + \sum_{j=1}^{[(g-1)/2]} 2(j+1)(g-j)\xi_j + \sum_{i=1}^{[g/2]} 4i(g-i)\delta_i \, .  \]
By the local Cornalba-Harris equality \cite{ch} \cite{ka} \cite{yacorn} this simplifies to
\[ (8g+4)\lambda = -\log \|\Lambda\| \]
where now the right hand side denotes the order of vanishing of $\Lambda$ along the closed point of the spectrum of the ring of integers of $k$. Now take a hyperelliptic curve $X$ of genus $g$ over $\qq$. As $\Lambda$ furnishes a non-zero section of the Hodge bundle $\det \mathrm{R}\pi_*\omega$ we have the formula
\[ (8g+4) \deg \det \mathrm{R}\pi_* \omega = - \sum_v \log \| \Lambda \|_v \log Nv \]
for the (non-normalized) stable Faltings height of $X$ over a finite field extension of $\qq$ where $X$ acquires semi-stable reduction. By equation (\ref{lambdaandheight}) and the known vanishing of $\langle \Delta_\xi,\Delta_\xi \rangle$ in the hyperelliptic case (cf. Section~4 of \cite{gs}),
one obtains that the constant implied by Theorem \ref{hyp}   actually vanishes. \\
 
\noindent \textbf{Acknowledgements} The author would like to thank B. Edixhoven, N. Kawazumi, B. Moonen and the referee for several helpful remarks.

\vspace{0.5cm}

\noindent Address of the author: 
Mathematical Institute,
University of Leiden,
PO Box 9512,
2300 RA Leiden,
The Netherlands. \\
Email: \verb+rdejong@math.leidenuniv.nl+

\end{document}